\documentclass[12pt,draftcls,onecolumn]{IEEEtran}
\usepackage{latexsym, epsfig, amsthm, amssymb, amsmath}
\usepackage{showidx}
\usepackage{tikz}

\title{ A Novel Description of Linear Time--Invariant  \\ Networks 
 via Structured Coprime Factorizations}

\author{ \c{S}erban Sab\u{a}u, \thanks{\c{S}erban Sab\u{a}u
is with the Electrical and Systems Engineering Dept.,
University of Pennsylvannia
email: serbans@seas.upenn.edu.}
Cristian Oar\u{a}, \thanks{Cristian Oar\u{a} is with the Faculty of the Automatic Control Dept., University ``Politehnica'' Bucharest. email: cristian.oara@acse.pub.ro}
Sean Warnick \thanks{ Sean Warnick is with the Information and Decision Algorithms Laboratories and the Faculty of the Computer Science Department at Brigham Young University.  email: sean.warnick@gmail.com.} 
and Ali Jadbabaie \thanks{Ali Jadbabaie is with the Faculty of the Electrical and Systems Engineering Dept.,  
University of Pennsylvannia
email: jadbabai@seas.upenn.edu.}
{\thanks{ C.~Oar\u{a} was supported by a grant of the Romanian National Authority for Scientific
Research, CNCS Ð UEFISCDI, project number PN-II-ID-PCE-2011-3-0235}}
{\thanks{A.~Jadbabaie was supported by AFOSR Complex Networks Program}}}


\renewcommand{\tilde}{\widetilde}

\newcommand{\FF}{{{\rm I \kern -0.2em R}}}
\newcommand{\RR}{{{\rm I \kern -0.2em R}}}

\newcommand{\CC}{{{\mbox{\rm \hspace*{0.05ex}
\rule[.18ex]{.18ex}{1.24ex} \kern -.65em C}}}} 
\newcommand{\rank}{\operatorname{rank}}

\newcommand{\bea}{\begin{eqnarray}}
\newcommand{\eea}{\end{eqnarray}}

\newcommand{\wti}{\widetilde}

\newtheorem{theorem}{Theorem}[section]
\newtheorem{rem}[theorem]{Remark}
\newtheorem{prop}[theorem]{Proposition}
\newtheorem{lem}[theorem]{Lemma} 
\newtheorem{defn}[theorem]{Definition}
\newtheorem{assumption}[theorem]{Assumption}

\newcommand{\ba}{\left[ \begin{array}}
\newcommand{\baa}{\begin{array}}
\newcommand{\ea}{\end{array} \right]}
\newcommand{\eaa}{\end{array}}

\newcommand{\be}{\begin{equation}}
\newcommand{\ee}{\end{equation}}
\newcommand{\bb}{\begin{equation}\label}

\newcommand{\Z}{{\bf Z}}

\newcommand{\rf}[1]{(\ref{#1})}
\newcommand{\boC}{\CC}

\newcommand{\la}{\lambda}





\def\math#1{\ifmmode{#1} \else {$#1$}\fi}

\newcommand{\U}{{\cal U}}

\newcommand{\sg}{\ifmmode \Sigma \else $\Sigma$ \fi}


\date{}

\begin{document}
\maketitle

\begin{abstract} 
In this paper we study state--space realizations of Linear and Time--Invariant (LTI) systems.  Motivated by biochemical reaction networks,  Gon\c{c}alves and Warnick have  recently introduced the notion of a  {\em Dynamical Structure Functions} (DSF), a particular factorization of the system's transfer function matrix that elucidates the interconnection structure in dependencies between manifest variables. We build onto this work by showing an intrinsic connection between a DSF and certain sparse left coprime factorizations. By establishing this link, we provide an interesting systems theoretic interpretation of sparsity patterns of coprime factors. In particular we show how the sparsity of these coprime factors allows for a given LTI system to be implemented as a network of LTI sub--systems. We examine possible applications in distributed control such as  the design of a LTI controller that can be implemented over a network with a pre--specified topology.
\end{abstract}

 \section{Introduction}

Distributed and decentralized control of LTI systems has been a topic of intense research focus in control theory for more than 40 years. Pioneering work includes includes that of Radner \cite{Radner}, who revealed the sufficient conditions under which the minimal quadratic cost for a linear system can be achieved by a linear controller. Ho and Chu \cite{Ho}, laid the foundation of team theory by introducing a general class of distributed structures,
dubbed  {\em partially nested}, for which they showed the optimal LQG controller to be linear. More recently in \cite{Voul1, Voul2, Rotko, Shah} important advances were made for the case where the decentralized  nature of the problem is modeled as sparsity constraints on the input-output operator (the transfer function matrix) of the controller. These types of constraints are equivalent with computing the output feedback control law while having access to only partial measurements. Quite different from this scenario, in this work we are studying the meaning of sparsity constraints on the left coprime factors of the controller, which is not noticeable on its transfer function. In particular, we show how the sparsity of these coprime factors allows for the given LTI controller to be implemented over a LTI network with a pre--specified topology.

More recently, network reconstruction of biochemical reaction networks have motivated a careful investigation into the nature of systems and the many interpretations of {\em structure} or sparsity structure one may define \cite{Sean08}.  In this work, a novel partial structure representation for Linear Time--Invariant (LTI) systems, called the {\em Dynamical Structure Function} (DSF) was introduced.  The DSF was shown to be a factorization of a system's transfer function that represented the {\em open-loop causal dependencies among manifest variables}, an interpretation of system structure dubbed the {\em Signal Structure}. 

\subsection{An Introductory Example \cite{Sean08}}



One important characteristic of the DSF is its ability to represent the impact that observed variables have on each other.  This can often effectively describe the interconnection structure between component subsystems within a given system. Consider for example the 3--hop ring (also called ``delta'') network in Figure~\ref{Ring}, where all the $Q(s)$  and $P(s)$ blocks represent  transfer functions of continuous--time LTI systems. We denote with $L(s)$ the transfer function from the input signals $U(s)$ to the outputs $Y(s)$.  By directly inspecting the signal flow graph in Figure~\ref{Ring} we can write the algebraic equations:

  \begin{small}
  \begin{equation} \label{RingRing}
\ba{c} Y_1(s) \\ Y_2(s) \\Y_3(s) \ea  = \ba{ccc} O & O & Q_{13}(s) \\ 
Q_{21}(s) & O  & O\\ O &  Q_{32}(s) &O\ea \ba{c} Y_1(s) \\ Y_2(s) \\ Y_3(s) \ea + \ba{ccc} I & O&O\\ 
O & P_{22}(s) &O\\ O&O&P_{33}(s) \ea  \ba{c} U_1(s) \\ U_2(s)\\U_3(s) \ea 
\end{equation}
\end{small}

We make the additional notation
\begin{small}
\begin{equation} \label{lokala1}
Q(s)\overset{def}{=}\ba{ccc} O & O & Q_{13}(s) \\ 
Q_{21}(s) & O  & O\\ O &  Q_{32}(s) &O\ea \quad \mathrm{and} \quad P(s)\overset{def}{=}  \ba{ccc} I & O&O\\ 
O & P_{22}(s) &O\\ O&O&P_{33}(s) \ea
\end{equation}
\end{small}

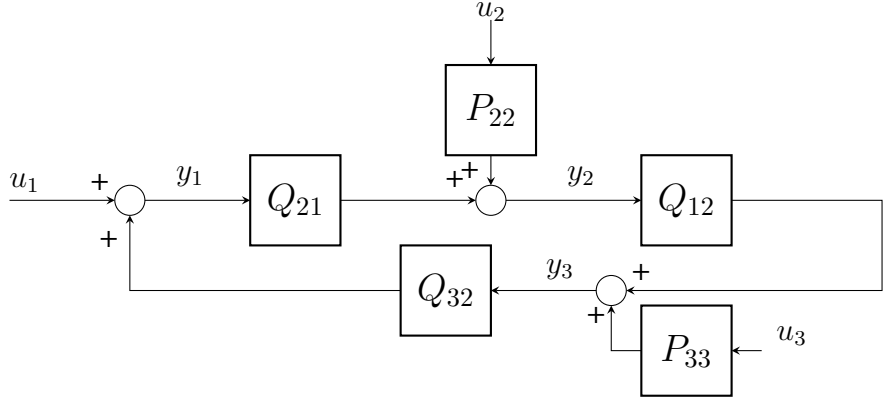
\begin{figure}
\begin{tikzpicture}[scale=0.4] 
\draw[xshift=5cm, >=stealth ] [->] (0,0) -- (3.5,0); 
\draw[ xshift=5cm ]  (4,0) circle(0.5); 
\draw[xshift=5cm] (3,0.6)   node {\bf{+}} (0.5,0.6) node {$u_1$};
\draw [xshift=5cm](6,0.8)   node {$y_1$} ;
\draw[ xshift=5cm,  >=stealth] [->] (4.5,0) -- (8,0); 
\draw[ thick, xshift=5cm]  (8,-1.5) rectangle +(3,3);
\draw [xshift=5cm](9.5,0)   node {\large{$Q_{21}$}} ;
\draw[ xshift=5cm,  >=stealth] [->] (11,0) -- (15.5,0); 
\draw[ xshift=5cm ]  (16,0) circle(0.5cm); 
\draw [xshift=5cm](19,0.8)   node {$y_2$} ;
\draw [xshift=5cm] (14.8,0.7)   node {\bf{+}};
\draw[  xshift=5cm,  >=stealth] [->] (16,6) -- (16,4.5); 
\draw[ thick, xshift=5cm ]  (14.5,1.5) rectangle +(3,3) ; 
\draw [xshift=5cm] (16,3)   node {\large{$P_{22}$}} ;
\draw[  xshift=5cm,  >=stealth] [->] (16,1.5) -- (16,0.5); 
\draw [xshift=5cm] (16,5.6)  node[anchor=south] {$u_2$}  (15.3,1)  node {\bf{+}};
\draw[  xshift=5cm,  >=stealth] [->] (16.5,0) -- (21,0); 
\draw[ thick, xshift=5cm ]  (21,-1.5) rectangle +(3,3) ; 
\draw [xshift=5cm] (22.5,0)   node {\large{$Q_{12}$}} ;
\draw [xshift=5cm] (18.3,-2.3)   node {\bf{$y_3$}};
\draw[ xshift=5cm ]  (20,-3) circle(0.5cm); 
\draw [xshift=5cm] (19.5,-3.8)   node {\bf{+}};
\draw [xshift=5cm] (21,-2.4)   node {\bf{+}};
\draw [xshift=5cm] (26,-4.5)   node {\bf{$u_3$}};
\draw[ xshift=5cm,  >=stealth] [->]  (25,-5)--(24,-5);  %
\draw[ xshift=5cm,  >=stealth] [->]  (21,-5)--(20,-5)--(20,-3.5); 
\draw[ thick, xshift=5cm ]  (21,-6.5) rectangle +(3,3) ; 
\draw [xshift=5cm] (22.5,-5)   node {\large{$P_{33}$}} ;
\draw[ xshift=5cm,  >=stealth] [->] (24,0)--(29,0)--(29,-3)--(20.5,-3); 
\draw[ xshift=5cm,  >=stealth] [->] (19.5,-3)--(16,-3);
\draw[ xshift=5cm,  >=stealth] [->] (13,-3) --(4,-3) -- (4, -0.5);
\draw[ thick, xshift=5cm ]  (13,-4.5) rectangle +(3,3) ; 
\draw [xshift=5cm] (14.5,-3)   node {\large{$Q_{32}$}} ;
\draw [xshift=5cm] (3.3,-1.3)   node {\bf{+}};
\useasboundingbox (0,5);
\end{tikzpicture} 
\caption{A 3--Hop Ring Network}
\label{Ring} 
\end{figure}

 \noindent and we define ad-hoc the $\big( Q(s),P(s) \big)$ pair to be the Dynamical Structure Function associated with the $L(s)$ LTI system. (The rigorous definition of DFS will be introduced in Section~\ref{adoua} following the original mathematical derivation from \cite{Sean08}.) An interesting observation, which is the main thesis of this work, is that the structure of the subsystems interconnections  in Figure~\ref{Ring} is no longer recognizable from the input-output relation described by the transfer function of the aggregate system $\displaystyle  L(s)=\big( I_3 - Q(s) \big)^{-1}P(s)$  since  the transfer function $L(s)$ does not have any sparsity pattern and in general does not have any other particularities.  The structure however, remains visible and it is captured in the quite particular sparsity patterns of $Q(s)$ and $P(s)$, respectively. This key property makes the DSF susceptible of becoming a perfectly suited theoretical concept to model any LTI network.

We want to illustrate further  how  the DSF  determines via equation (\ref{DSF}) the topology of the LTI network that can describe the given LTI system $L(\la)$.  If we consider $Q_{13}(s)$ identically zero in (\ref{Ring}) which would mean``breaking'' the ring network from Figure~\ref{RingRing}  then it becomes a cascade connection and $L(\la)$ can be implemented as a ``line'' network. A ``line'' network controller could be interesting for example  motion control of vehicles moving in a platoon formation.

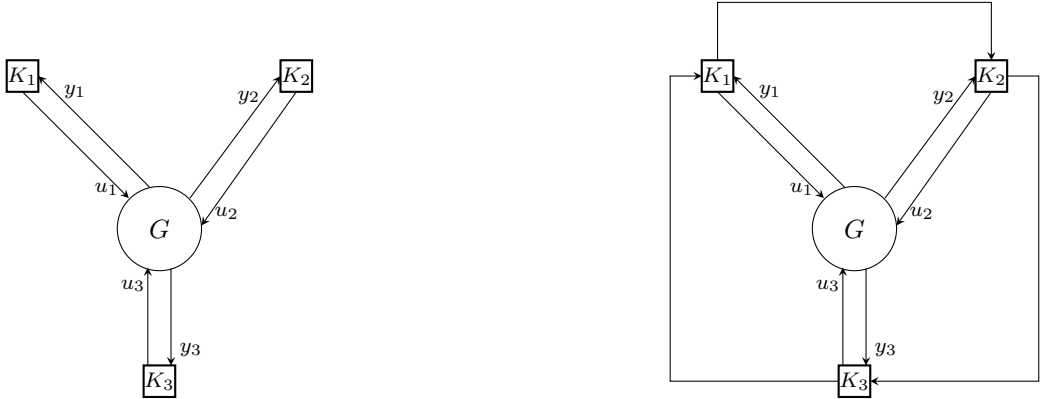
\begin{figure}
\begin{tikzpicture}[scale=0.14] 
\draw[ thick, xshift=44 cm]  (3,-1.5) rectangle +(3,3);
\draw [xshift=44 cm](4.5,0)   node {$\mbox{\fontsize{8}{10}\selectfont $K_1$}$} ;
\draw[xshift=44 cm, >=stealth ] [->] (4.5,-1.5) -- (14.6,-11.6); 
\draw [xshift=44 cm](12.5,-10.7)   node {$\mbox{\fontsize{8}{10}\selectfont $u_1$}$} ;
\draw[xshift=44 cm, >=stealth ] [->] (16.6,-10.6) -- (6,0); 
\draw [xshift=44 cm](9.5,-1.5)   node {$\mbox{\fontsize{8}{10}\selectfont $y_1$}$} ;
\draw[xshift=44 cm ]  (17.5,-14.5) circle(4); 
\draw [xshift=44 cm](17.5,-14.5)   node {$\mbox{\fontsize{10}{12}\selectfont $G$}$} ;
\draw[ thick, xshift=44 cm]  (29,-1.5) rectangle +(3,3);
\draw [xshift=44 cm](30.5,0)   node {$\mbox{\fontsize{8}{10}\selectfont $K_2$}$} ;
\draw[xshift=44 cm, >=stealth ] [->] (20.4,-11.6)--(29,0); 
\draw [xshift=44 cm](26,-2)   node {$\mbox{\fontsize{8}{10}\selectfont $y_2$}$} ;
\draw[xshift=44 cm, >=stealth ] [->] (30.5,-1.5)--(21.5,-14.2); 
\draw [xshift=44 cm](23.9,-13)   node {$\mbox{\fontsize{8}{10}\selectfont $u_2$}$} ;
\draw[ thick, xshift=44 cm] (16,-30.5) rectangle +(3,3);
\draw [xshift=44 cm](17.5,-29)   node {$\mbox{\fontsize{8}{10}\selectfont $K_3$}$} ;
\draw[xshift=44 cm, >=stealth ] [->] (16.4,-27.5)--(16.4,-18.2); 
\draw [xshift=44 cm](15,-19.9)   node {$\mbox{\fontsize{8}{10}\selectfont $u_3$}$} ;
\draw[xshift=44 cm, >=stealth ] [->] (18.6,-18.3)--(18.6,-27.5); 
\draw [xshift=44 cm](20.5,-26)   node {$\mbox{\fontsize{8}{10}\selectfont $y_3$}$} ;
\draw[ thick, xshift=110 cm]  (3,-1.5) rectangle +(3,3);
\draw [xshift=110 cm](4.5,0)   node {$\mbox{\fontsize{8}{10}\selectfont $K_1$}$} ;
\draw[xshift=110 cm, >=stealth ] [->] (4.5,-1.5) -- (14.6,-11.6); 
\draw [xshift=110 cm](12.5,-10.7)   node {$\mbox{\fontsize{8}{10}\selectfont $u_1$}$} ;
\draw[xshift=110 cm, >=stealth ] [->] (16.6,-10.6) -- (6,0); 
\draw [xshift=110 cm](9.5,-1.5)   node {$\mbox{\fontsize{8}{10}\selectfont $y_1$}$} ;
\draw[xshift=110 cm ]  (17.5,-14.5) circle(4); 
\draw [xshift=110 cm](17.5,-14.5)   node {$\mbox{\fontsize{10}{12}\selectfont $G$}$} ;
\draw[ thick, xshift=110 cm]  (29,-1.5) rectangle +(3,3);
\draw [xshift=110 cm](30.5,0)   node {$\mbox{\fontsize{8}{10}\selectfont $K_2$}$} ;
\draw[xshift=110 cm, >=stealth ] [->] (20.4,-11.6)--(29,0); 
\draw [xshift=110 cm](26,-2)   node {$\mbox{\fontsize{8}{10}\selectfont $y_2$}$} ;
\draw[xshift=110 cm, >=stealth ] [->] (30.5,-1.5)--(21.5,-14.2); 
\draw [xshift=110 cm](23.9,-13)   node {$\mbox{\fontsize{8}{10}\selectfont $u_2$}$} ;
\draw[ thick, xshift=110 cm] (16,-30.5) rectangle +(3,3);
\draw [xshift=110 cm](17.5,-29)   node {$\mbox{\fontsize{8}{10}\selectfont $K_3$}$} ;
\draw[xshift=110 cm, >=stealth ] [->] (16.4,-27.5)--(16.4,-18.2); 
\draw [xshift=110 cm](15,-19.9)   node {$\mbox{\fontsize{8}{10}\selectfont $u_3$}$} ;
\draw[xshift=110 cm, >=stealth ] [->] (18.6,-18.3)--(18.6,-27.5); 
\draw [xshift=110 cm](20.5,-26)   node {$\mbox{\fontsize{8}{10}\selectfont $y_3$}$} ;
\draw[xshift=110 cm, >=stealth ] [->] (4.5,1.5)--(4.5,7)--(30.5,7)--(30.5,1.5); 
\draw[xshift=110 cm, >=stealth ] [->] (32,0)--(35,0)--(35,-29)--(19,-29);
\draw[xshift=110 cm, >=stealth ] [->] (16,-29)--(0,-29)--(0,0)--(3,0);
\end{tikzpicture}
\caption{The Plant $G$ and the Decentralized (Diagonal) Controller (left) versus a ``Ring'' Networked Controller (right)}
\label{Motivation} 
\end{figure}

Note that, in general, the impact of observed variables on each other, represented by the DSF, and the interconnection between subsystems, can be quite different structures.  This is because the states internal to one subsystem are always distinct from another, while the states internal to component systems in the DSF may be shared with other components.  Nevertheless, the point in this example, that the DSF, as a factorization of a system's transfer function, captures an important notion of structure, is always true.  Details about the distinctions between subsystem structure and the signal structure described by the DSF can be found in \cite{Enoch}.

\subsection{Motivation and Scope of Work} In this paper we look at Dynamical Structure Functions from a control systems perspective.  A long standing problem in control of LTI systems was synthesis of decentralized stabilizing controllers (\cite{Wang}) which means imposing on the controller's transfer function matrix $K(s)$ to have a diagonal sparsity pattern.   Quite different to the decentralized paradigm, the ultimate goal of our research would be a systematic method of designing controllers that can be implemented as a LTI network with a pre--specified topology. This is equivalent with computing a stabilizing  controller $K(s)$ whose DSF  $\big( Q(s),P(s) \big)$ satisfies certain  sparsity constraints \cite{Anurag}. So, instead of imposing sparsity constraints on the transfer function of the controller as it is the case in decentralized control, we are interested in imposing the sparsity constraints on the controller's DSF. This would eventually lead to the possibility of designing controllers that can be implemented as a LTI network, see for example Figure~\ref{Motivation}.

\subsection{Contribution}

The contribution of this paper  is the establishment of the intrinsic connections between the DSFs and the left coprime factorizations of a given transfer function and  to give a systems theoretic meaning to sparsity patterns of coprime factors using DSFs.    The importance of this is twofold. First, this is the most common scenario in control engineering practice ({\em e.g.} manufacturing, chemical plants) that the given plant is made out of many interconnected sub--systems. The {\em structure} of this interconnection is captured by a DFS description of the plant which in turn might translate to  left coprime factorization of the plant that features certain sparsity patterns on its factors. This sparisty might be used for the synthesis of a controller to be implemented over a LTI network. Conversely,  in many  applications  it is desired that the stabilizing controller be implemented in a distributed manner, for instance as a LTI network with a pre--specified topology. This is  equivalent to imposing certain sparsity constraints on the left coprime factorization of the controller (via the celebrated Youla parameterization). In order to fully exploit the power of the DSFs approach to tackle these types of problems, we find it useful to underline its links  with the classical notions and results in control theory of LTI systems.  We provide here a comprehensive exposition of the elemental connections between the Dynamical Structure Functions and the Coprime Factorizations of a given Linear Time--Invariant (LTI) system, thus opening the way between exploiting the structure of the plant via the DSF and employing the celebrated Youla parameterization for feedback output stabilization. 

\subsection{Outline of the Paper} In the second Section of the paper we give a brief outline of the theoretical concept of Dynamical Structure Functions as originally introduced in \cite{Sean08}. In the third Section, we show that while  the DSF representation of a given LTI system $L(s)$ is in general never coprime, a closely related representation dubbed a viable $(W,V)$ pair associated with $L(s)$ is always coprime. We also provide the class of all viable $(W,V)$ pairs associated with a given $L(s)$. The fourth Section contains the main results of the paper and it makes a complete explanation of the natural connections between  the DSFs and the viable $(W,V)$ pairs associated with a given $L(s)$ and its left coprime factorizations. The last Section contains the conclusions and future research directions. In the Appendix~A we have provided a short primer on realization theory for improper TFMs which is indispensable for the proofs of the main results.  The proofs of the main results have been placed in Appendix~B.

 \section{Dynamical Structure Functions} \label{adoua}

The main object of study here is a LTI system, which in the continuous--time case are described by the state equations

\begin{subequations}
\begin{align} 
\dot{ x} (t) &=  A x (t) +  Bu (t) ; \; {x}(t_o)={x}_o & \label{ss0ab} \\
y(t) &=  C  x (t) + D u(t) & \label{ss0c}
\end{align}
\end{subequations}

\noindent  where  $A, B, C, D$ are $n\times n$, $n \times m$, $p \times n$, $p \times m$
real matrices, respectively  while  $n$ is also called {\em the order} of the realization. 
Given any $n$--dimensional state--space representation (\ref{ss0ab}), (\ref{ss0c})  of  a LTI system $(A$, $B$, $C$, $D)$, its input--output representation  is given by the {\em Transfer Function Matrix} (TFM) which is the $p\times m$ matrix with real, rational functions entries denoted with

\begin{equation}
\label{state-space}
L(\la) =  \ba{c|c}A & B \\ \hline C & D \ea \overset{def}{=} D + C(\la I_n - A)^{-1}B,
\end{equation}

\begin{rem}\label{lambda} Our results apply on both continuous or discrete time LTI systems, hence we assimilate the undeterminate
  $\lambda$ with  the complex variables $s$ or $z$ appearing in the
Laplace or Z--transform, respectively, depending on the type of the system. 
\end{rem}

For elementary notions in linear systems theory,
such as state equivalence, controlability, observability, detectability, we refer to \cite{Wonham}, or any other standard text book on LTI systems.

By $\mathbb{R}^{p\times m}$ we denote the set of $p \times m$ real matrices and by $\mathbb{R}(\la)^{p\times m}$ we denote  $p \times m$ transfer function matrices (matrices having entries real--rational functions).

This  section contains a discussion  based on reference \cite{Sean08} on the definition of the  Dynamical Structure Functions associated with a LTI system. We start with the given system $L(\la)$ described by the  following state equations, of order $n$:

\begin{subequations}
\begin{align} 
\dot{\tilde x} (t) &= \tilde A\tilde x (t) +\tilde Bu (t) ; \; \tilde{x}(t_o)=\tilde{x}_o & \label{ss1ab} \\
y(t) &= \tilde C\tilde x (t) & \label{ss1c}
\end{align}
\end{subequations}

\begin{assumption} \label{Regularity} {\em (Regularity)} We make the  assumption that the  $\tilde C$ matrix  from (\ref{ss1c}) has full row rank (it is surjective).
\end{assumption}
    
We choose any matrix $\bar C$ such that $T\overset{def}{=} \ba{c} \tilde C \\ \bar C \ea$ is nonsingular (note that such $\bar C$ always exists because $\tilde C$ has full row rank) and  apply  a  state--equivalence transformation

\begin{equation} \label{similea}
 x(t)=T {\tilde x} (t), \quad  A = T\tilde A T^{-1}, \quad  B = T\tilde B, \quad
 C = \tilde C T^{-1}.
\end{equation}

\noindent on  (\ref{ss1ab}),(\ref{ss1c}) in order to get

\begin{subequations}
\begin{align} 
\ba{c} y(t) \\ z (t) \ea & = T \tilde x(t) & \label{ss20} \\
\ba{c} \dot{y}(t)  \\ \dot{z}(t) \ea  &= \ba{cc}  A_{11} & A_{12} \\ A_{21} & A_{22}  \ea  \ba{c} y (t)  \\ z (t) \ea  + \ba{c} B_1 \\ B_2 \ea u(t); \; \; \ba{c} y(t_o) \\ z (t_o) \ea = \ba{c} y_o \\ z_o \ea   &  \label{ss2ab}  \\
y (t) &= \ba{cc} I_p & O \ea \ba{c} y(t)  \\ z(t) \ea   & \label{ss2c}
\end{align}
\end{subequations}

 \begin{assumption} \label{Observability} {\em (Observability)}  We can assume without any loss of generality that the pair $(\tilde C, \tilde A)$ from (\ref{ss1ab}), (\ref{ss1c}) or equivalently the  pair $( A_{12}, A_{22})$ from (\ref{ss2ab}) are observable.
\end{assumption}

\begin{rem} \label{Leuenberger} The argument that the observability assumption does not imply any loss of generality, is connected with the Leuenberger reduced order observer.
\end{rem}

Looking at the Laplace or Z--transform of the equation in (\ref{ss2ab}), we get

\begin{equation} \label{Lapla}
\ba{cc} \la I_p -A_{11} & - A_{12} \\ - A_{21} &\la I_{n-p} - A_{22}  \ea  \ba{c} Y(\la)  \\ Z(\la) \ea =  \ba{c} B_1 \\ B_2 \ea U(\la) 
\end{equation}

By multiplying (\ref{Lapla})  from the left   with the following factor $\Omega(\la)$

\begin{equation} \label{Omega_i}
\Omega (\la) = \ba{cc} I_{p} & \: A_{12} ( \la I_{n-p} - A_{22})^{-1} \\ 
                                      O          & I _{n-p} \\ \ea
\end{equation}

\noindent (note that $\Omega(\la)$ is  always invertible as a TFM) we get 

\begin{equation} \label{WV1}
\ba{cc} \Big((\la I_p - A_{11}) -  A_{12} (\la I_{n-p} - A_{22})^{-1}A_{21} \Big)\;  & \; O \\
* & * \ea \ba{c} Y(\la) \\ Z(\la) \ea = \Omega(\la)  \ba{c} B_1 \\ B_2 \ea U(\la)
\end{equation}

\noindent where the $*$ denote entries  whose exact expression is not needed now. 
Immediate calculations yield that the first block--row in (\ref{WV1}) is equivalent with

\begin{equation} \label{WVin1}
\la\: Y(\la) = \Big( A_{11} +  A_{12} (\la I_{n-p} - A_{22})^{-1}A_{21} \Big) Y(\la)+ \Big(B_1 +  A_{12} (\la I_{n-p} - A_{22})^{-1}B_2 \Big) U(\la)
\end{equation}

\noindent and by making the notation

\begin{subequations}
\begin{align}
 W(\la) & \overset{def}{=}  -A_{11} -  A_{12} (\la I_{n-p} - A_{22})^{-1}A_{21} &  \label{Wprimo} \\
V(\la) & \overset{def}{=}  B_1 +  A_{12} (\la I_{n-p} - A_{22})^{-1}B_2 &  \label{Vprimo}
\end{align}
\end{subequations}

\noindent  we finally get the following equation which describes the relationship between manifest variables

\begin{equation} \label{WVin5} \la Y(\la) = W(\la) Y(\la) + V(\la) U(\la). 
\end{equation}

\begin{rem} \label{Ali} Note that if $V(\la)$ is identically zero, while $W(\la)$ is a constant matrix  having the sparisity of a graph's Laplacian, then (\ref{WVin5}) becomes the free evolution equation $\la Y(\la) = W Y(\la)$. These types of equations  have been extensively studied in cooperative control \cite{Ali} to describe the dynamics of a large group of autonomous agents. Equation  (\ref{WVin5})  can be looked at as a generalization of that model and will be studied here in a different context. 
\end{rem}

Since $L(\la)$ is the input--output operator from $U(\la)$ to $Y(\la)$, we can write  equivalently that  $\displaystyle L(\la) = \big(\la I_p -W(\la)\big)^{-1}V(\la)$,  which is exactly the $(W,V)$ representation from  \cite[(3)/ pp.1671]{Sean08}. (Note that  since $W(\la)$ is always proper it follows that $\big(\la I_p -W(\la)\big)$ is always invertible as a TFM.) Next, let $D(\la)$ denote the TFM obtained by taking the diagonal entries of $W(\la)$, that is $\displaystyle D(\la) \overset{def}{=} \mathrm{diag} \{ W_{11}(\la), W_{22}(\la) \dots W_{pp}(\la) \}$. Then we can write $\displaystyle L(\la) = \Big[ \Big( \la I_p -D(\la) \Big) - - \Big( W(\la) -D(\la) \Big) \Big]^{-1}V(\la)$, or equivalently (note that $(W-D)$ has zeros on the diagonal entries)

\begin{equation} \label{spreQP}
L(\la) = \Big[ I- \Big( \la I_p -D(\la) \Big)^{-1} \Big( W(\la) -D(\la) \Big) \Big]^{-1} \Big( \la I_p -D(\la) \Big)^{-1}  V(\la)
\end{equation}
\noindent and after introducing the notation

\begin{subequations} 
\begin{align}
 Q(\la) & \overset{def}{=}   \Big( \la I_p -D(\la) \Big)^{-1} \Big( W(\la) -D(\la) \Big) & \label{Q} \\
P(\la) & \overset{def}{=}  \Big( \la I_p -D(\la) \Big)^{-1}  V(\la) &  \label{P} 
\end{align}
\end{subequations}

\noindent we get that $\displaystyle L(\la) = \Big( I_p -Q(\la)\Big)^{-1}P(\la)$ or equivalently that

\begin{equation} \label{DSF}
Y(\la)= Q(\la) Y(\la) + P(\la) U(\la)
\end{equation}

\begin{rem} \label{AliObs}
The splitting and the ``extraction'' of the diagonal in (\ref{Q}) are made in order  to make the $Q(\la)$ have the sparsity (and the meaning) of the {\em adjacency matrix} of the graph describing the causal relationships between the manifest variables $Y(\la)$. Consequently, $Q(\la)$ will always have zero entries on its diagonal.
\end{rem}

\begin{defn} \cite[Definition~1]{Sean08}\label{DSFdefin} Given the state--space realization (\ref{ss2ab}),(\ref{ss2c}) of $L(\la)$ the Dynamical Structure Function of the system is defined to be the pair $\big( Q(\la), P(\la) \big)$, where $ Q(\la), P(\la)$ are given by (\ref{Q}) and (\ref{P}) respectively. 
\end{defn}

\section{Dynamical Structure Functions Revisited}

  One scope of this  paper and also one of its contributions is to emphasize the idea that for a given TFM $L(\la)$ there exist more than one pair $\big( Q(\la), P(\la) \big)$ than the one in (\ref{Q}),(\ref{P}) (originally introduced in \cite{Sean08}) and which  satisfy (\ref{DSF}).  In fact there exists a whole class of pairs $\big( Q(\la), P(\la) \big)$ that do satisfy (\ref{DSF}) and for which  $Q(\la)$ has all its block--diagonal entries equal to zero. In order to illustrate this we need to slightly reformulate the original Definition~\ref{DSFdefin}  of Dynamical Structure Functions associated with a $L(\la)$  as follows:

\begin{defn} \label{DSFdefinw} Given a TFM $L(\la)$, we define a Dynamical Structure Function representation of $L(\la)$ to be any two TFMs $Q(\la)\in\mathbb{R}^{p \times p}(\la)$ and $P(\la)\in\mathbb{R}^{p \times m}(\la)$ with $Q(\la)$ having zero entries on its diagonal,  such that $\displaystyle L(\la) = \big( I_p -Q(\la) \big)^{-1}P(\la)$ or equivalently
\begin{equation} \label{DSFreluat}
Y(\la)= Q(\la) Y(\la) + P(\la) U(\la)
\end{equation}
\end{defn}

The following definition will also be needed in the sequel.

\begin{defn}\label{obiectu} Given the TFM $L(\la)$,  we call a {\em viable} $\big( W(\la),V(\la) \big)$ {\em pair} associated with $L(\la)$, any two TFMs $W(\la)\in\mathbb{R}^{p \times p}(\la)$ and $V(\la)\in\mathbb{R}^{p \times m}(\la)$, with $W(\la)$ having McMillan degree at most $(n-p)$ and such that 

\begin{equation} \label{WV}
 L(\la) = \Big( \la I_p -W(\la) \Big)^{-1}V(\la).
\end{equation}
\end{defn}

\begin{prop} \label{conectia}  Given  a TFM $L(\la)$ then for any given  {\em viable} $\big( W(\la),V(\la) \big)$ {\em pair} associated with $L(\la)$, there exists a {\em unique} DSF representation $\big( Q(\la), P(\la) \big)$ of $L(\la)$ given by (\ref{Q}) and  (\ref{P}), where $\displaystyle D(\la) \overset{def}{=} \mathrm{diag} \{ W_{11}(\la), W_{22}(\la) \dots W_{pp}(\la) \}$ is uniquely determined by $W(\la)$.
\end{prop}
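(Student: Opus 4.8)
The plan is to read off, from the definition of a viable pair, the unique candidate for $\big(Q(\la),P(\la)\big)$ and then verify both existence (i.e. the candidate is a legitimate DSF representation in the sense of Definition~\ref{DSFdefinw}) and uniqueness. The starting point is equation (\ref{WVin5}), which is just a restatement of (\ref{WV}): from $L(\la)=\big(\la I_p-W(\la)\big)^{-1}V(\la)$ we get $\la Y(\la)=W(\la)Y(\la)+V(\la)U(\la)$. Given the viable pair, $D(\la)\overset{def}{=}\mathrm{diag}\{W_{11}(\la),\dots,W_{pp}(\la)\}$ is manifestly a well-defined TFM determined entirely by $W(\la)$, and since $W(\la)$ is proper (its McMillan degree is finite, indeed at most $n-p$), so is $D(\la)$; hence $\la I_p-D(\la)$ is invertible as a TFM — this is the same observation already used just before (\ref{spreQP}). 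So $Q(\la)$ and $P(\la)$ defined by (\ref{Q}) and (\ref{P}) are well-defined rational matrices.

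First I would check that this $\big(Q(\la),P(\la)\big)$ is a DSF representation of $L(\la)$. There are two things to verify. (i) $Q(\la)$ has zero diagonal: by (\ref{Q}), $Q(\la)=\big(\la I_p-D(\la)\big)^{-1}\big(W(\la)-D(\la)\big)$, and since $\la I_p-D(\la)$ is diagonal while $W(\la)-D(\la)$ has zero diagonal by construction of $D(\la)$, the product has zero diagonal; this is exactly Remark~\ref{AliObs}. (ii) $L(\la)=\big(I_p-Q(\la)\big)^{-1}P(\la)$: starting from $\la Y=WY+VU$, subtract $D(\la)Y$ from both sides to get $\big(\la I_p-D(\la)\big)Y=\big(W(\la)-D(\la)\big)Y+V(\la)U$, then left-multiply by $\big(\la I_p-D(\la)\big)^{-1}$ to obtain $Y=Q(\la)Y+P(\la)U$, i.e. $Y=\big(I_p-Q(\la)\big)^{-1}P(\la)U$; one only needs that $I_p-Q(\la)=\big(\la I_p-D(\la)\big)^{-1}\big(\la I_p-W(\la)\big)$ is invertible as a TFM, which follows since both $\la I_p-D(\la)$ and $\la I_p-W(\la)$ are (the latter because $W$ is proper). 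This is precisely the derivation (\ref{spreQP})--(\ref{DSF}) run with a general viable pair in place of the specific one from Section~\ref{adoua}.

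Next I would prove uniqueness. Suppose $\big(Q(\la),P(\la)\big)$ is \emph{any} DSF representation of $L(\la)$ built from the given viable pair via the prescription of the Proposition, i.e. associated with the same $D(\la)=\mathrm{diag}\{W_{ii}(\la)\}$ through (\ref{Q}),(\ref{P}). Since $D(\la)$ is literally defined as a function of $W(\la)$, there is no freedom left: (\ref{Q}) and (\ref{P}) are explicit formulas, so $Q(\la)$ and $P(\la)$ are determined. The only substantive content of ``unique'' here is therefore that $D(\la)$ itself is forced — but that too is immediate, since the diagonal of $W(\la)$ is an intrinsic feature of the viable pair. So uniqueness is essentially a tautology once the statement is read carefully: among DSF representations arising from this viable pair by the stated construction, there is exactly one.

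The genuinely delicate point — and the step I expect to need the most care — is not any of the algebra above but making sure all the indicated TFM inverses exist, i.e. that $\la I_p - D(\la)$ and $I_p - Q(\la)$ are invertible \emph{as rational matrices} for a general viable $W(\la)$ rather than just for the concrete $W$ coming from (\ref{Wprimo}). The key fact is that a viable pair has $W(\la)$ of finite McMillan degree, hence proper, so $\det\big(\la I_p - W(\la)\big)$ is a nonzero rational function and $\la I_p - W(\la)$ is invertible over $\mathbb{R}(\la)$; the same argument applied to the diagonal $D(\la)$ gives invertibility of $\la I_p - D(\la)$; and then $I_p - Q(\la) = \big(\la I_p - D(\la)\big)^{-1}\big(\la I_p - W(\la)\big)$ is a product of invertibles. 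I would state this properness observation explicitly at the outset, as it is what licenses every manipulation, and then the remaining steps are the routine left-multiplications indicated above.
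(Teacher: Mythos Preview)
Your argument follows the same path as the paper's, whose entire proof reads ``The proof follows immediately from the very definitions (\ref{Q}), (\ref{P}).'' You have simply unpacked what ``immediately'' means: check that (\ref{Q})--(\ref{P}) produce a $Q$ with zero diagonal and a factorization $L=(I_p-Q)^{-1}P$, and observe that the formulas leave no freedom once $W$ (hence $D$) is fixed.

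One caution on the invertibility discussion you flag as ``the genuinely delicate point'': your justification contains a false implication. You assert that $W(\la)$ is proper because ``its McMillan degree is finite, indeed at most $n-p$,'' but finite McMillan degree does \emph{not} force properness---the McMillan degree in this paper counts poles at infinity as well (Definition~\ref{McMillanDesc}), so for instance the scalar $W(\la)=\la$ has McMillan degree $1$ yet is improper, and for it $\la I_1-D(\la)\equiv 0$. Definition~\ref{obiectu} as written does not by itself rule this out. What actually secures properness here is the concrete description of all viable pairs in Theorem~\ref{Main}: the realization (\ref{Wdef}) is a standard (non-descriptor) state-space realization of order $n-p$, so every viable $W$ arising there is proper, and then your argument for the invertibility of $\la I_p-D(\la)$ and of $I_p-Q(\la)$ goes through. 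If you want to retain the invertibility discussion, appeal to Theorem~\ref{Main} (or to the paper's earlier remark that $W$ is always proper) rather than to the McMillan-degree bound; otherwise your elaboration is a faithful expansion of the paper's one-line proof.
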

\begin{proof} The proof follows immediately from the very definitions (\ref{Q}), (\ref{P}).
\end{proof}

\begin{rem} \label{sprs} It is important to  remark here that any {\em viable} $\big( W(\la),V(\la) \big)$ pair has the same sparsity pattern with its subsequent DSF representation $\big( Q(\la), P(\la) \big)$. For example $W(\la)$ is lower triangular if and only if $Q(\la)$ is lower triangular. Similarly, for instance $V(\la)$ is tridiagonal if and only if $P(\la)$ is tridiagonal.
\end{rem}

\begin{rem} \label{Main1} Using Proposition~\ref{conectia} we can conclude that in order to find  all DSFs (according to Definition~\ref{DSFdefinw}) associated with a given $L(\la)$, it is sufficient to study the set of all {\em viable} $\Big( W(\la),V(\la) \Big)$ {\em pairs} associated with $L(\la)$. The following theorem gives  closed--formulas for the parameterization of the class of all {\em viable} $\Big( W(\la),V(\la) \Big)$ {\em pairs} associated with a given TFM.
\end{rem}

\begin{theorem} \label{Main} Given a TFM $L(\la)$ having a state--space realization (\ref{ss1ab}),(\ref{ss1c}), we compute any equivalent realization (\ref{ss2ab}),(\ref{ss2c}). The class of all {\em viable} $\Big( W(\la),V(\la) \Big)$ {\em pairs} associated with $L(\la)$ is then given by

\begin{equation} \label{Wdef}
-W(\la)= \ba{c|c}
(A_{22}+ K A_{12})  -\la I_{n-p}  & A_{22}K + KA_{12}K -K A_{11} - A_{21}  \\ 
 \hline  A_{12}   & -A_{11} + A_{12}K  \\ \ea
\end{equation}

\begin{equation} \label{Vdef}
 V(\la)= \ba{c|c} (A_{22}+ K A_{12})  -\la I_{n-p}  & KB_1+B_2 \\ \hline A_{12} & B_1 \\
 \ea 
\end{equation}
\noindent where the $K$ is any matrix  in $\mathbb{R}^{(n-p) \times p}$ and $A_{11},A_{12},,A_{21},A_{22},B_1,B_2$ are as in (\ref{ss2ab}),(\ref{ss2c}).
\end{theorem}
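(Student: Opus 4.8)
The plan is to prove the two inclusions of \thmref{Main} separately: that every $K \in \mathbb{R}^{(n-p)\times p}$ produces, through (\ref{Wdef})--(\ref{Vdef}), a {\em viable} pair for $L(\la)$; and, conversely, that every {\em viable} pair is of that form.

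For the inclusion ``every $K$ gives a {\em viable} pair'', I would apply to the realization (\ref{ss2ab}),(\ref{ss2c}) the state similarity $T_K \overset{def}{=} \ba{cc} I_p & O \\ K & I_{n-p} \ea$, which fixes the measured coordinate $y$ and replaces the hidden coordinate $z$ by $\hat z \overset{def}{=} Ky + z$. Because a state similarity does not change the transfer function, the transformed quadruple is again a realization of $L(\la)$ of the form (\ref{ss2ab}),(\ref{ss2c}), now with blocks $\hat A_{11} = A_{11}-A_{12}K$, $\hat A_{12} = A_{12}$, $\hat A_{22} = A_{22}+KA_{12}$, $\hat A_{21} = A_{21}+KA_{11}-A_{22}K-KA_{12}K$, $\hat B_1 = B_1$, $\hat B_2 = B_2+KB_1$ and output matrix $\ba{cc} I_p & O \ea$. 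Carrying out on it verbatim the derivation (\ref{Lapla})--(\ref{WVin5}) --- left multiplication by the corresponding $\Omega(\la)$ and reading off the first block row --- yields a {\em viable} $\big( W(\la),V(\la) \big)$ pair for $L(\la)$; substituting the hatted blocks into (\ref{Wprimo}),(\ref{Vprimo}) and rewriting the result in the realization notation of Appendix~A produces exactly (\ref{Wdef}),(\ref{Vdef}). The McMillan--degree requirement is automatic, the displayed realization of $W(\la)$ having order $n-p$.

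For the converse inclusion the idea is to turn an arbitrary {\em viable} pair back into an $n$--th order realization of $L(\la)$ in the structured form (\ref{ss2ab}),(\ref{ss2c}), and then exploit the uniqueness of such realizations. Starting from a {\em viable} pair $\big( W(\la),V(\la) \big)$, pick a joint state--space realization $W(\la) = \mathcal{D}_W + \mathcal{C}(\la I_\nu - \mathcal{A})^{-1}\mathcal{B}_W$, $V(\la) = \mathcal{D}_V + \mathcal{C}(\la I_\nu - \mathcal{A})^{-1}\mathcal{B}_V$, and set $w(\la) \overset{def}{=} (\la I_\nu - \mathcal{A})^{-1}\big( \mathcal{B}_W Y(\la) + \mathcal{B}_V U(\la) \big)$; then (\ref{WVin5}) becomes a state--space realization of $L(\la)$ with state $\col\big( Y(\la),w(\la) \big)$, state matrix $\ba{cc} \mathcal{D}_W & \mathcal{C} \\ \mathcal{B}_W & \mathcal{A} \ea$, input matrix $\ba{c} \mathcal{D}_V \\ \mathcal{B}_V \ea$, and output matrix $\ba{cc} I_p & O \ea$, of order $p+\nu$. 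The McMillan--degree hypothesis on $W(\la)$, together with the realization theory recalled in Appendix~A applied to (\ref{ss1ab}),(\ref{ss1c}), is what pins down $p+\nu = n$, so the reconstructed realization and (\ref{ss2ab}),(\ref{ss2c}) are two $n$--th order realizations of $L(\la)$ with the same output matrix $\ba{cc} I_p & O \ea$ and hence related by a state similarity $T$; the condition $\ba{cc} I_p & O \ea T^{-1} = \ba{cc} I_p & O \ea$ forces $T = \ba{cc} I_p & O \\ K & T_{22} \ea$ with $K \in \mathbb{R}^{(n-p)\times p}$ and $T_{22}$ invertible. A short check then shows that the pair delivered by (\ref{Lapla})--(\ref{WVin5}) is unaffected by a further change of basis $\diag(I_p,T_{22})$ in the state --- the factors $T_{22}$ and $T_{22}^{-1}$ cancel in $A_{12}(\la I_{n-p}-A_{22})^{-1}A_{21}$ and in the analogous expression for $V(\la)$ --- so one may take $T = T_{\hat K}$ for a suitable $\hat K$, and the computation of the first inclusion then identifies $\big( W(\la),V(\la) \big)$ with (\ref{Wdef}),(\ref{Vdef}) for $\hat K$.

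The step I expect to be the crux is, within the converse inclusion, the assertion that the realization of $L(\la)$ reconstructed from a {\em viable} pair has order exactly $n$: this is precisely where the McMillan--degree bound of Definition~\ref{obiectu} must be used to its full strength, in tandem with the minimality and coprimeness conventions of the realization theory collected in Appendix~A. Once the orders are matched, the reduction of $T$ to block--lower--triangular form and the invariance of the $(W,V)$ construction under the inner block $T_{22}$ are routine.
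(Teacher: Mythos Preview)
Your first inclusion is exactly the paper's argument: apply the block lower--triangular similarity $T_K=\ba{cc} I_p & O\\ K & I_{n-p}\ea$ to (\ref{ss2ab}),(\ref{ss2c}), then premultiply the resulting frequency--domain equations by the corresponding $\Omega_K(\la)$ and read off the first block row; this yields (\ref{Wdef}),(\ref{Vdef}) and the McMillan--degree bound is immediate from the order of the displayed realization.  That is also \emph{all} the paper proves --- its Appendix~B argument establishes only that each $K$ produces a viable pair and does not address the converse.

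Your converse attempt therefore goes beyond the paper, and the gap sits exactly where you yourself suspected.  The McMillan--degree hypothesis in Definition~\ref{obiectu} bounds only $\delta(W)\le n-p$; it says nothing about the order $\nu$ of a \emph{joint} realization of $\big(W(\la),V(\la)\big)$, which is governed by the McMillan degree of the compound $\ba{cc} W(\la)&V(\la)\ea$ and can differ from $\delta(W)$ when $V$ contributes additional poles.  So the inequality you obtain is $p+\nu\ge p+\delta(W)$, not the equality $p+\nu=n$ you need.  Even granting matching orders, the step ``two $n$--th order realizations with output matrix $\ba{cc} I_p & O\ea$ are related by a state similarity'' requires minimality on both sides; at the point of \thmref{Main} only observability (Assumption~\ref{Observability}) is in force --- controllability is introduced only later, in Assumption~\ref{Controlability} --- so neither realization is known to be minimal and the similarity need not exist.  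In short, your forward direction matches the paper, while the converse you sketch is not in the paper and, as written, does not close.
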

\begin{proof} See Appendix~B.
\end{proof}

\begin{rem} \label{stable}
We remark here the poles of both $W(\la)$ and $V(\la)$ can be allocated at will in the complex plane, by a suitable choice of the matrix $K$ and the assumed observability of the  pair $(A_{12}, A_{22})$ (Assumption~\ref{Observability}).
\end{rem}

\section{Main Results}

The ultimate goal of this line of research would be computing controllers whose DSF has a certain structure. This would allow us for instance to compute controllers that can be implemented as a ``ring'' network (see Figure~\ref{Ring}) or as a ``line'' network  which is important for motion control  of vehicles moving in a platoon formation. However, classical results in LTI systems control theory, such as the celebrated Youla parameterization (or its equivalent formulations) render the expression of the stabilizing controller as a stable  coprime factorization of its transfer function.  As a first step towards employing Youla--like methods for the  synthesis of controllers featuring structured DSF, we need to understand the connections between the stable left coprime factorizations (of a given stabilizing controller) and its DSF representation. We address this problem in this section.
 
 \subsection{A Result on Coprimeness} In this subsection we prove that (by chance rather than by design) for any viable  $\big(W(\la),V(\la) \big)$ pair associated with a given $L(\la)$ (with $W(\la)$ and $V(\la)$ as in Theorem~\ref{Main}) it follows that  $\displaystyle  \big(\la I_p -W(\la)\big), V(\la)\big)$  is a {\em left coprime factorization} of $L(\la)$.  An equivalent condition for  $\big(\la I_p -W(\la),V(\la) \big)$ to be left coprime is for the compound transfer function matrix  
\begin{equation} \label{compusa}
 \ba{cc} \Big( \la I_p -W(\la) \Big) & \; V(\la) \ea
\end{equation}
\noindent to have no (finite or infinite)  Smith zeros (see \cite{Rose70, V, SIMAX} for equivalent characterizations of left coprimeness). Coprimeness  is especially important for output feedback stabilization, since  classical results such as the celebrated Youla parameterization, require a  coprime factorization of the plant while also rendering coprime factors of the stabilizing controllers.   

\begin{assumption} \label{Controlability}
{\em (Controllability)} From this point onward we assume that the realization (\ref{ss1ab}),(\ref{ss1c}) of $L(\la)$ is controllable.
\end{assumption} 
 
 \begin{theorem} \label{CupruMin} Given a TFM $L(\la)$, then for any viable  $\big(W(\la),V(\la) \big)$ pair associated with a given $L(\la)$ (with $W(\la)$ and $V(\la)$ as in Theorem~\ref{Main}) it follows that  $\displaystyle \big(\la I_p -W(\la),V(\la)\big)$  is a {\em left coprime factorization} of $L(\la)$.
 \end{theorem}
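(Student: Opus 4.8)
The plan is to verify the characterization recorded immediately before the statement: $\big(\lambda I_p - W(\lambda),\,V(\lambda)\big)$ is a left coprime factorization of $L(\lambda)$ precisely when the compound $R(\lambda) := \ba{cc} \lambda I_p - W(\lambda) & V(\lambda)\ea$ has no Smith zeros, finite or infinite. That it \emph{is} a factorization of $L$ is immediate from Definition~\ref{obiectu}, so the whole content is the absence of common zeros. I would establish this by two independent arguments, one for the finite zeros and one for the zero at infinity, both of which reduce to the standing controllability hypothesis, Assumption~\ref{Controlability}.

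For the finite zeros, the key observation is that the formulas \rf{Wdef}, \rf{Vdef} are produced from the realization \rf{ss2ab}, \rf{ss2c} of $L$ in two steps: the constant block lower triangular state similarity $S = \ba{cc} I_p & O \\ K & I_{n-p}\ea$, which absorbs the free parameter $K$ and leaves $C = \ba{cc} I_p & O\ea$ unchanged, followed by the elimination of the transformed non-manifest state via the Schur complement already performed in \rf{Omega_i}--\rf{WV1}. Consequently $R(\lambda)$ admits the \emph{polynomial} (Rosenbrock) system matrix
\[
\mathcal P(\lambda) \;=\; \ba{ccc} \lambda I_{n-p} - (A_{22}+K A_{12}) & * & * \\ A_{12} & \lambda I_p - (A_{11}-A_{12}K) & B_1 \ea,
\]
whose starred blocks are read off from \rf{Wdef}, \rf{Vdef}. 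Reversing the Schur-complement step exhibits $\mathcal P(\lambda)$, up to a permutation and a sign normalization of its block rows and block columns, as the controllability pencil $\ba{cc} \lambda I_n - SAS^{-1} & SB\ea$ of the $S$-transformed realization; since $S$ is constant and invertible, this pencil has, at every $\lambda$, the same rank as $\ba{cc} \lambda I_n - A & B\ea$, which equals $n$ for all $\lambda \in \mathbb C$ by Assumption~\ref{Controlability}. Hence $\mathcal P(\lambda)$ has full row rank $n$ at every finite $\lambda$, and by the realization theory for improper TFMs recalled in Appendix~A the finite Smith zeros of $R(\lambda)$ lie among the rank drops of $\mathcal P(\lambda)$; therefore $R(\lambda)$ has none. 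Since the argument is carried out on the \emph{polynomial} matrix $\mathcal P(\lambda)$, it automatically covers also the $\lambda$'s that are poles of $W$ or of $V$, at which $R(\lambda)$ itself is undefined.

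For the zero at infinity, note that \rf{Wdef}, \rf{Vdef} show $W$ and $V$ to be proper (each is a constant matrix plus a strictly proper term), so that
\[
R(\lambda) \;=\; \lambda\left(\ba{cc} I_p & O\ea + \lambda^{-1}\ba{cc} -W(\lambda) & V(\lambda)\ea\right) \;=:\; \lambda\,M(\lambda),
\]
where $M(\lambda)$ is proper and $M(\infty) = \ba{cc} I_p & O\ea$ has full row rank $p$. A proper rational matrix whose value at infinity has full row rank has no Smith zero at infinity, and $R = \lambda M$ then has none either (it only picks up a pole there). Combining the two parts, $R(\lambda)$ has no finite and no infinite Smith zeros, which is precisely the asserted left coprimeness.

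The one genuinely delicate point is the reversal of the Schur complement: one has to check that forming the system matrix of the compound realization of $R(\lambda)$ exactly cancels the $\Omega(\lambda)$/Schur-complement manipulation used to define $W$ and $V$, so that no rational, non-unimodular factor survives and the rank count transfers verbatim from $\ba{cc} \lambda I_n - A & B\ea$. Everything else — pinning down the starred blocks of $\mathcal P(\lambda)$ together with their signs, and writing out the constant invertible factors that relate $\mathcal P(\lambda)$ to the transformed controllability pencil — is routine bookkeeping. (As a byproduct, full row rank of $\mathcal P(\lambda)$ for all finite $\lambda$ forces controllability of the realization of $R(\lambda)$ used above, so that realization is minimal, consistent with $R(\lambda)$ being zero-free.)
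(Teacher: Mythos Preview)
Your argument is correct and, for the finite zeros, arrives at the same reduction as the paper: both show that the relevant system matrix is, after constant row/column operations, the controllability pencil $\ba{cc}\la I_n-A & B\ea$, so that Assumption~\ref{Controlability} and the PBH test give full row rank at every finite $\la$. The paper packages this through the pencil--realization machinery of Appendix~A: it writes down an explicit order--$n$ realization of type~\rf{a3}, verifies all four minimality conditions of Proposition~\ref{m5}, and only then runs the chain of equivalences on the system pencil. You bypass that scaffolding by observing directly that $R(\la)$ is the Schur complement of $\ba{cc}\la I_n-SAS^{-1} & SB\ea$ with respect to the $(A_{22}+KA_{12})$ block, so the Rosenbrock matrix \emph{is} the controllability pencil up to constant invertible factors. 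That is cleaner; it buys you exactly the saving of the four minimality checks. Your treatment of the zero at infinity, via $R(\la)=\la M(\la)$ with $M$ proper and $M(\infty)=\ba{cc}I_p & O\ea$ of full row rank, is also more transparent than the paper's deferral to \cite[Lemma~1]{Verg79}.

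One small caveat: your appeal to ``the realization theory for improper TFMs recalled in Appendix~A'' for the inclusion \{finite Smith zeros of $R$\} $\subseteq$ \{rank drops of $\mathcal P$\} is not quite on target, since Appendix~A treats realizations of the specific form $(A-\la E,\,B-\la F,\,C,\,D)$ with constant $D$, whereas your $\mathcal P(\la)$ has a $\la$--dependent lower--right block. The inclusion you need is the standard Rosenbrock fact that transmission zeros are contained among the invariant zeros of any polynomial system matrix (minimality is required only for the reverse inclusion); citing Rosenbrock \cite{Rose70} directly would make this step airtight.
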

 \begin{proof} See Appendix~B.
 \end{proof}

  \begin{rem} \label{nevercop} We remark here that while any viable  $\big(W(\la),V(\la) \big)$ pair associated with a given $T(\la)$ makes out for a left coprime factorization $L(\la)=\big( \la I_p- W(\la)\big)^{-1}V(\la)$, the DSF $\displaystyle L(\la) = \big( I_p -Q(\la)\big)^{-1}P(\la)$ are in general never coprime (unless the plant is stable or diagonal). That is due to the fact that in general not all the unstable zeros of $\big(\la I_p - D(\la)\big)$ cancel out when forming the products in (\ref{Q}), (\ref{P}) and the same unstable zeros will result in poles/zeros cancelations when forming the product $\displaystyle L(\la) = \big( I_p -Q(\la)\big)^{-1}P(\la)$.
\end{rem}
  
\subsection{Getting from DSFs to Stable Left Coprime Factorizations}
 In this subsection we show that for any viable pair $\big( W(\la), V(\la) \big)$ with both $W(\la)$ and $V(\la)$, respectively being stable, there exists a class of stable left coprime factorizations. Furthermore, there exists a class of stable left coprime factorizations that preserve the sparsity pattern of the original viable pair $\big( W(\la), V(\la) \big)$. 
 
 Note that for any viable pair $\big(W(\la), V(\la) \big)$ is an {\em improper} rational function and it has exactly $p$ poles at infinity of multiplicity one, hence the $\big( \la I_p - W(\la) \big)$ factor (the denominator of the factorization) is inherently {\em unstable} (in either continuous or discrete--time domains).  We remind the reader  that  any the poles of both $W(\la)$ and $V(\la)$ can be allocated at will in the stability domain (Remark~\ref{stable}). In this subsection, we show how to get  from viable pair  $\big(W(\la),V(\la) \big)$ of $L(\la)$  in which both factors $W(\la)$ and $V(\la)$ are stable, to a stable left coprime factorization  $L(\la) = M^{-1}(\la)N(\la)$. We achieve this  without altering any of the stable poles of $W(\la)$ and $V(\la)$ (which are the modes of $(A_{22}+K A_{12})$ in (\ref{Wdef}), (\ref{Vdef}))  and while at the same time keeping the McMillan degree to the minimum. The problem is  to displace the $p$ poles at infinity (of multiplicity one) from the $\big (\la I_p-W(\la)\big)$ factor. To this end we will use the {\em Basic Pole Displacement Result} from \cite[Theorem~3.1]{SIMAX} that shows that this can be achieved by premultiplication with an adequately chosen invertible factor $\Theta(\la)$ such that when forming the product $\displaystyle \Theta(\la)\big(\la I_p-W(\la) \big)$ all the $p$ poles at infinity of the factor $\big(\la I_p-W(\la)\big)$ cancel out. Here follows the precise statement:

   \begin{lem}\label{step2} Given a viable pair  $\big(\la I_p-W(\la),V(\la) \big)$ of  $L(\la)$ then for any  
 
 \begin{equation} \label{Theta}
\Theta(\la)\overset{def}{=} \ba{c|c} A_x  -\la I_p & T_4 \\
   \hline T_5   & O \ea
 \end{equation}
 \noindent with $A_x,T_4,T_5$ arbitrarily chosen such that $A_x$ has only stable eigenvalues and  both $T_4,T_5$ are invertible, it follows that
 
 \begin{equation}
  \ba{cc} M(\la) & \; N(\la) \ea\overset{def}{=} \Theta(\la) \ba{cc} \Big( \la I_p -W(\la) \Big) & \; V(\la) \ea
 \end{equation}
 
 \noindent is  a {\em stable} left coprime factorization $L(\la)=  M^{-1}(\la) N(\la) $. Furthermore, 
  \begin{small}
 \begin{equation} \label{MN}
  \ba{cc} M(\la) & \; N(\la) \ea =  \ba{cc|cc} A_x  -\la I_{n-p}\; \; & T_4 A_{12}  & (A_xT_4 -T_4 A_{11} +T_4A_{12}K) \; \; & T_4B_1 \\
                           O & A_{22}+KA_{12} -\la I_p & (A_{22}K+KA_{12}K-KA_{11}-A_{21})  & KB_1+B_2 \\ 
                               \hline T_4^{-1} & O & I  & O \\
 \ea 
 \end{equation}
 \end{small}
 \noindent hence all the modes  in $(A_{22}+K A_{12})$ (which are the original  stable poles of  $W(\la)$ and $V(\la)$) are preserved in the $M(\la)$ and $N(\la)$ factors.
 \end{lem}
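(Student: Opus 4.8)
The plan is to first establish the closed-form realization (\ref{MN}) for the product $[\,M(\la)\ \ N(\la)\,]:=\Theta(\la)\,[\,\la I_p-W(\la)\ \ V(\la)\,]$, and then to read the three assertions of the lemma off it: (i) $M^{-1}N=L$; (ii) $M,N$ are proper, stable, and retain the finite modes of $A_{22}+KA_{12}$; (iii) $(M,N)$ is left coprime. For (\ref{MN}) itself I would compute $\Theta(\la)\,[\,\la I_p-W(\la)\ \ V(\la)\,]$ directly from the realizations (\ref{Theta}), (\ref{Wdef}), (\ref{Vdef}). The one non-mechanical ingredient is the identity $(\la I_p-A_x)^{-1}A_x=\la(\la I_p-A_x)^{-1}-I_p$, which reveals that, although $\Theta$ is strictly proper, the product $\Theta(\la)\big(\la I_p-W(\la)\big)$ is in fact biproper: $\Theta$ has McMillan degree $p$, hence $p$ zeros at infinity, and these absorb precisely the $p$ poles at infinity of $\la I_p-W(\la)$ --- this is the Basic Pole Displacement mechanism of \cite[Theorem~3.1]{SIMAX}, and it dictates the chosen form of $\Theta$. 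Carrying the computation out, the $A_{11}-A_{12}K$ contributions cancel in pairs and one is left with the first-order ($\mathcal E=I_n$), order-$n$ realization (\ref{MN}); a different invertible $T_5$ only left-multiplies $M$ and $N$ by the constant invertible matrix $T_5T_4$ and is therefore harmless, which is how (\ref{MN}) is normalized. Since $\Theta(\la)$ is invertible as a TFM --- its inverse is the first-degree polynomial matrix $T_4^{-1}(\la I_p-A_x)T_5^{-1}$, well defined because $T_4,T_5$ are invertible --- and since $\la I_p-W(\la)$ is invertible as a TFM, it follows that $M^{-1}N=\big(\la I_p-W\big)^{-1}\Theta^{-1}\Theta\,V=\big(\la I_p-W\big)^{-1}V=L$.

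Step (ii) is read off (\ref{MN}) directly: the realization is of first order, so $M$ and $N$ are proper; its state matrix is block upper triangular with diagonal blocks $A_x$ and $A_{22}+KA_{12}$, whose spectra lie in the stability domain --- the former by the hypothesis on $\Theta$, the latter because the viable pair under consideration is the stable one (as in \remref{stable}, $K$ has been chosen to place these modes in the stability domain). Hence $M,N\in RH^\infty$, and since the block $A_{22}+KA_{12}$ survives verbatim in (\ref{MN}) --- the pole displacement having touched only the $p$ modes at infinity --- the original stable poles of $W$ and $V$ are retained in $M$ and $N$, which is the ``furthermore'' assertion.

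For (iii) I would use the criterion that $(M,N)$ is a left coprime factorization of $L$ iff $[\,M(\la)\ N(\la)\,]$ has full row rank at every point of $\CC\cup\{\infty\}$. At $\la=\infty$ this is automatic: from (\ref{MN}), $[\,M(\infty)\ N(\infty)\,]=[\,I_p\ O\,]$. At finite points I would read the Smith zeros of $[\,M(\la)\ N(\la)\,]$ off (\ref{MN}): writing $\mathcal A,[\,\mathcal B_M\ \mathcal B_N\,],\mathcal C,[\,I_p\ O\,]$ for its matrices, and using that the $M$-part of the feedthrough is $I_p$, an elementary row reduction of the system pencil of (\ref{MN}) shows that the Smith zeros of $[\,M\ N\,]$ sit among the uncontrollable modes of the pair $\big(\mathcal A-\mathcal B_M\mathcal C,\ \mathcal B_N\big)$. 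Up to the constant state similarity $\diag(T_4^{-1},I_{n-p})$, this pair equals $\big(A^\sharp,B^\sharp\big)$, where $A^\sharp$ is built from the blocks $A_{ij}$ of (\ref{ss1ab})--(\ref{ss1c}) by the output injection $K$ (its lower-right block being $A_{22}+KA_{12}$) and $B^\sharp=\col(B_1,\,KB_1+B_2)$, and one checks that $\big(A^\sharp,B^\sharp,[\,I_p\ O\,],O\big)$ realizes $L$. One then shows $\big(A^\sharp,B^\sharp\big)$ is controllable; hence $[\,M\ N\,]$ has no Smith zeros at all and $(M,N)$ is left coprime. (As a cross-check, structurally: $[\,M\ N\,]=\Theta\,[\,\la I_p-W\ \ V\,]$ with $\Theta$ invertible and free of finite zeros, $[\,\la I_p-W\ \ V\,]$ free of finite and infinite Smith zeros by \thmref{CupruMin}, and the $p$ infinite zeros of $\Theta$ annihilate the $p$ infinite poles of $[\,\la I_p-W\ \ V\,]$.)

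I expect the main obstacle to be exactly the controllability of $\big(A^\sharp,B^\sharp\big)$ in step (iii). This needs a genuine PBH argument rather than a formal manipulation: one must deduce full row rank of the $K$-deformed pencil $[\,A^\sharp-\la I_n\ \ B^\sharp\,]$ from full row rank of the data pencil $[\,A-\la I_n\ \ B\,]$ (Assumption~\ref{Controlability}), the point being that a left annihilator $[\,v_1\ v_2\,]$ of the former lifts, via the $K$-shear $[\,v_1+v_2K\ \ v_2\,]$, to a left annihilator of the latter --- so a rank drop of the deformed pencil would force one of the data pencil. The product computation behind (\ref{MN}) is the lengthiest piece but is entirely routine once the cancellation of the $p$ poles at infinity is organized through the identity above, and steps (ii) and the $\la=\infty$ part of (iii) are immediate.
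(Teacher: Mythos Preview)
Your approach is correct but genuinely different from the paper's. The paper does not compute the product $\Theta(\la)\,[\,\la I_p-W(\la)\ \ V(\la)\,]$ at all, nor does it verify stability or coprimeness directly: it works in the opposite direction, invoking the Basic Pole Displacement Result \cite[Theorem~3.1]{SIMAX} on the minimal pencil realization (\ref{IWV}) of $[\,\la I_p-W\ \ V\,]$ and \emph{deriving} the admissible form of $\Theta$ from the Sylvester-type constraints \cite[(3.2)]{SIMAX}; stability, properness, coprimeness and the realization (\ref{MN}) are then left as consequences of that cited theorem. Your route instead takes $\Theta$ as given, carries out the cascade computation explicitly via the identity $(\la I_p-A_x)^{-1}\la=I_p+(\la I_p-A_x)^{-1}A_x$, and checks each claim on (\ref{MN}) by hand, including the PBH-based coprimeness argument through controllability of $(\mathcal A-\mathcal B_M\mathcal C,\mathcal B_N)$. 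What your approach buys is self-containment --- you never need the pole--displacement machinery of \cite{SIMAX} --- and an explicit verification of (\ref{MN}), which the paper's proof omits; what the paper's approach buys is brevity and an explanation of \emph{why} $\Theta$ must have the form (\ref{Theta}). Your observation that (\ref{MN}) is written for the normalization $T_5=T_4^{-1}$ (general $T_5$ contributing only the harmless left factor $T_5T_4$) is correct and worth keeping; and for the controllability of $(A^\sharp,B^\sharp)$ in step~(iii) you can shorten the ``$K$-shear'' argument to a single line by noting that $(A^\sharp,B^\sharp)$ is exactly the pair in (\ref{intermedia}), i.e.\ the image of the controllable pair $(A,B)$ of (\ref{ss2ab}) under the state similarity $T=\bigl[\begin{smallmatrix}I&O\\K&I\end{smallmatrix}\bigr]$.
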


 \begin{proof} See Appendix~B.
 \end{proof}

 \begin{rem} \label{buna} We remark that  for {\em any} diagonal $A_x$ having only stable eigenvalues $\Theta(\la)=(\la I_p - A_x)^{-1}$ yields
 a stable left coprime factorization of $L(\la)$ that preserves the {\em  sparsity structure} of the initial viable $\Big(\la I_p-W(\la),V(\la) \Big)$ pair.
 \end{rem}

 \subsection{Connections with the Nett \& Jacobson Formulas \cite{Nett}}

 In this subsection, we are interested in connecting the expression from (\ref{MN}) for the pair  $(M(\la), N(\la))$  to the classical result of state--space derivation of  left coprime factorizations of a given plant originally presented in \cite{Nett} (and generalized in \cite{Lucic}).

 \begin{prop} \cite{Nett, Lucic}
\label{SIMAX99}
Let $L(\la)$ be an arbitrary $m\times p$ TFM and $\Omega$ a domain in $\boC$. The class of {\em all} left coprime factorizations of $L(\la)$ over $\Omega$, $T(\la) = {M}^{-1}(\la ){N}(\la)$, is given by
\bb{dc2}
\ba{cc} M(\la) &  N(\la) \ea
 = U^{-1}\ba{c|cc} (A - FC) - \la I & -F & B \\ \hline
C & I & O \ea,
\ee

\noindent where $A, B, C, F$ and $U$ are real matrices accordingly dimensioned such that

\noindent {\bf \em i)}  $U$ is any $p \times p$ invertible matrix,

\noindent {\bf \em ii)}  F is any feedback matrix that  allocates the observable modes of the $(C,A)$ pair to $\Omega$,

\noindent {\bf \em iii)} $L(\la) =  \ba{c|c}A - \la I & B \\ \hline C & O \ea$ is a stabilizable realization.

 \end{prop}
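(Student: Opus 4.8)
\emph{Proof proposal.} The statement is a biconditional: every pair $\ba{cc}M(\la) & N(\la)\ea$ produced by \eqref{dc2} under (i)--(iii) is a left coprime factorization (LCF) of $L(\la)$ over $\Omega$, and conversely every LCF of $L(\la)$ over $\Omega$ is obtained in this way. The plan is to prove the two inclusions in turn, using throughout the pencil/PBH characterization of coprimeness recalled in the excerpt: $\big(M(\la),N(\la)\big)$ is left coprime over $\Omega$ iff the compound $\ba{cc}M(\la) & N(\la)\ea$ has no Smith zeros (finite or at infinity) in $\boC\setminus\Omega$, i.e.\ the system pencil of any realization of $\ba{cc}M & N\ea$ has full row rank for every $\la\notin\Omega$ and its feedthrough has full row rank.

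\emph{Part 1: the formula produces only left coprime factorizations.} With $\widehat A:=A-FC$, the formula reads $M=U^{-1}\big(I-C(\la I-\widehat A)^{-1}F\big)$ and $N=U^{-1}C(\la I-\widehat A)^{-1}B$. First I would verify $M^{-1}N=L$ by the push-through identity $C(\la I-A+FC)^{-1}=\big(I+C(\la I-A)^{-1}F\big)^{-1}C(\la I-A)^{-1}$: writing $G:=C(\la I-A)^{-1}F$ gives $M=U^{-1}(I+G)^{-1}$ and $N=U^{-1}(I+G)^{-1}C(\la I-A)^{-1}B=U^{-1}(I+G)^{-1}L$ by (iii), hence $M^{-1}N=L$. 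Next, coprimeness: since $U^{-1}$ is a constant invertible matrix it changes neither poles nor Smith zeros, so it suffices to take $U=I$, and the row operation ``add $F$ times the second block row to the first'' transforms the system pencil of \eqref{dc2} as
\[
\ba{ccc}(A-FC)-\la I & -F & B \\ C & I & O\ea
\;\longrightarrow\;
\ba{ccc}A-\la I & O & B \\ C & I & O\ea ,
\]
and the identity block in the middle column shows that this last pencil has full row rank at $\la$ exactly when $\ba{cc}A-\la I & B\ea$ does. By the stabilizability assumption (iii) this holds for all $\la\notin\Omega$, and the feedthrough $\ba{cc}I & O\ea$ has full row rank, so $\ba{cc}M & N\ea$ is left coprime over $\Omega$. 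Finally, by (ii) the matrix $F$ moves the observable modes of $(C,A)$ into $\Omega$, while the unobservable modes of $(C,A)=(C,A-FC)$ contribute no poles to $M$ or $N$; hence the poles of $M,N$ lie in $\Omega$ and \eqref{dc2} is a genuine LCF over $\Omega$.

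\emph{Part 2: the formula produces all of them.} Given an arbitrary LCF $L=M^{-1}N$ over $\Omega$, I would recover the data $(A,B,C,F,U)$ from a minimal realization of the compound factor. The $O$ feedthrough in \eqref{dc2} forces $L$ strictly proper, hence $N$ strictly proper and $M$ biproper; take a minimal realization $\ba{cc}M(\la) & N(\la)\ea=\ba{c|cc}A_M-\la I & B_1 & B_2 \\ \hline C_M & D_1 & O\ea$ with the partition conforming to $M\leftrightarrow(B_1,D_1)$, $N\leftrightarrow(B_2,O)$ and $D_1=M(\infty)$ invertible, and set
\[
U:=D_1^{-1},\quad C:=D_1^{-1}C_M,\quad B:=B_2,\quad F:=-B_1,\quad A:=A_M-B_1D_1^{-1}C_M .
\]
A direct check using the inversion formula for the biproper $M$, the series-composition formula for $M^{-1}N$, and the state change that renders the spurious copy of $A_M$ unobservable shows that $L=M^{-1}N=\ba{c|c}A-\la I & B \\ \hline C & O\ea$, so $(A,B,C,O)$ realizes $L$, and substituting back into \eqref{dc2} returns the given $\ba{cc}M & N\ea$. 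Condition (i) is immediate. For (iii): the pencil $\ba{ccc}(A-FC)-\la I & -F & B \\ C & I & O\ea=\ba{ccc}A_M-\la I & B_1 & B_2 \\ D_1^{-1}C_M & I & O\ea$ becomes, after scaling its lower rows by $D_1$, the system pencil of the minimal realization of $\ba{cc}M & N\ea$; minimality together with left coprimeness of $\ba{cc}M & N\ea$ over $\Omega$ make it full row rank for all $\la\notin\Omega$, and the row operation of Part 1, run in reverse, then yields full row rank of $\ba{cc}A-\la I & B\ea$ there, i.e.\ $(A,B)$ stabilizable over $\Omega$. For (ii): $A-FC=A_M$ has all its eigenvalues in $\Omega$ (minimal realization of the stable $\ba{cc}M & N\ea$), so in particular this $F$ places the observable modes of $(C,A)$ in $\Omega$ and the unobservable ones already lie there, so the choice is admissible.

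\emph{Main obstacle.} The delicate part is Part 2: one must chain three realization-theoretic operations (inversion of the biproper $M$, the series composition $M^{-1}N$, and the cancellation of the redundant $A_M$-block) and then translate ``$\ba{cc}M & N\ea$ left coprime over $\Omega$'' into ``$(A,B)$ stabilizable over $\Omega$'' through the same pencil row-operation used in Part 1. Careful bookkeeping at $\la=\infty$ is needed -- this is exactly why strict properness of $L$ (equivalently, the $O$ feedthrough in (iii)) entails no loss of generality here -- and the improper-realization primer of Appendix~A would be invoked wherever realizations with behaviour at infinity must be manipulated.
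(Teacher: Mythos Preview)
The paper does not prove this proposition at all: it is quoted verbatim as a classical result with the citation \cite{Nett, Lucic} and is then \emph{used} (in the subsection ``Connections with the Nett \& Jacobson Formulas'') to identify the feedback matrix $F$ in \eqref{F} that reconciles the authors' formula \eqref{MN} with the standard Nett--Jacobson parameterization. So there is no in-paper proof to compare your attempt against.

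That said, your self-contained argument is correct and is essentially the standard one. Two minor comments. First, in Part~2 you can avoid the ``spurious copy of $A_M$'' altogether: from the minimal realization $\ba{cc}M & N\ea=(A_M,[B_1\ B_2],C_M,[D_1\ 0])$ the one-step identity $M^{-1}N=\big(A_M-B_1D_1^{-1}C_M,\ B_2,\ D_1^{-1}C_M,\ 0\big)$ already gives $(A,B,C,0)$ directly, so no series composition followed by reduction is needed. Second, your closing remark that ``the unobservable ones already lie there'' is true but superfluous for verifying~(ii): the hypothesis only asks $F$ to place the \emph{observable} modes of $(C,A)$ in $\Omega$, and since $A-FC=A_M$ has \emph{all} its eigenvalues in $\Omega$ this is automatic; the location of the unobservable modes is irrelevant to~(ii) (and was already used in Part~1 to conclude that $M,N$ have poles only in $\Omega$).
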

 
Due to Assumption~\ref{Controlability}, we have to replace  the stabilizability from point {\bf \em iii)} with a controlability assumption. We start off with $L(\la)$ given by the equations (\ref{ss1ab}),(\ref{ss1c})
 
 \begin{equation} \label{oooo1}
 L(\la)=\ba{cc|c}  A_{11} - \la I_p&  A_{12}  &B_1 \\ 
                             A_{21}  & A_{22} - \la I_{n-p}   & B_2 \\ \hline
                             I & O & O \\ \ea  
 \end{equation}

 \noindent  and we want to retrieve  (\ref{MN}) by using the  parameterization in Proposition~\ref{SIMAX99}. First apply a state-equivalence  $\displaystyle T=\ba{cc} T_4 & O \\ K & I \ea$ in order to get 
 
 \begin{equation} \label{oooo1}
L(\la) = \ba{cc|c}T_4 (A_{11}- A_{12} K)T_4^{-1}-\la I_p  & T_4 A_{12}  & T_4 B_1 \\ 
                           \big( KA_{11} + A_{21} -KA_{12}K -A_{22}K \big)T_4^{-1}  &KA_{12}+ A_{22} - \la I_{n-p}  & K B_1+B_2 \\  \hline
                             T_4^{-1} & O & O \\ \ea  
 \end{equation}
 
 Next, we only need to identify the $F$ feedback matrix from point  {\bf \em ii)} of  Proposition~\ref{SIMAX99}, which in this case is proven to be  given by
 \begin{equation} \label{F}
 F=\ba{cc} (T_4^{-1}A_x T_4) - A_{11} + A_{12}K
 \\ -K(T_4^{-1}A_x T_4) + A_{22}K - A_{21} \ea
 \end{equation}

 To check,  simply plug  (\ref{F}) in (\ref{dc2}) for the realization  (\ref{oooo2})  of $L(\la)$.

  \subsection{Getting from the Stable Left Coprime Factorization to the DSFs}
 
 In this subsection we  show that for {\em almost} every stable left coprime factorization of a given LTI system, there is an associated a {\em unique} viable  $\big(W(\la),V(\la) \big)$ pair and consequently (via Remark~\ref{conectia}) a unique DSF representation $\big(Q(\la),P(\la) \big)$. The key role in establishing this one to one correspondence is played  by a non--symmetric Riccati equation, whose solution existence is a generic property.  This result is meaningful, since for controller synthesis while we are interested in the DSF of the controller, in general we only have access to a stable left coprime of the controller.

 We start with a  given  stable left coprime factorization (\ref{dc2}) for  $L(\la)$ having an order $n$ realization
 
 \begin{equation} \label{uuu}
\ba{cc} M(\la) &  N(\la) \ea
 = U^{-1}\ba{c|cc} (A - FC) - \la I & -F & B \\ \hline
C & I & O \ea
 \end{equation}

\noindent  to which we  apply a type (\ref{similea}) state--equivalence transformation with $T\in \mathbb{R}^{n \times n}$   such that $\displaystyle CT^{-1} = \ba{cc} I_p & O \ea$. Note that such a $T$ always exists because of Assumption~\ref{Regularity}. It follows that (\ref{uuu}) takes the form

 \begin{equation} \label{Kontroller}
\ba{cc} M(\la) & N(\la) \ea =  \ba{cc|cc} A_{11} +F_1  -\la I_p \; \; & A_{12}  \; \; & F_1& B_1 \\
                           A_{21} +F_2 & A_{22} -\la I_{n-p} & F_2 & B_2 \\ 
                               \hline I_p & O  & I_p & O \\  \ea
 \end{equation}

\noindent and denote

\begin{equation} \label{aplus}
A^+ \overset{def}{=}\ba{cc} A_{11} +F_1  & - A_{12} \\  -(A_{21} +F_2) &  A_{22}  \ea
\end{equation}

The solution to the following nonsymmetric algebraic Riccati matrix equation is paramount to the main result of this subsection, since it underlines the one to one correspondence between (\ref{Kontroller}) and its  {\em unique} associated  viable  $\big(W(\la),V(\la) \big)$ pair.
\begin{prop} \label{Ric}
The nonsymmetric algebraic Riccati matrix equation
  
  \begin{equation} \label{Riccati}
  K(A_{11}+F_1) -A_{22}K    -KA_{12}K  +  (A_{21} + F_2) = O
  \end{equation}

\noindent has a stabilizing solution $K$ ({\em i.e.} $(A_{11}+F_1-A_{12}K)$ is stable) if and only if the $A^+$ matrix from (\ref{aplus}) has a stable invariant subspace of dimension $p$   with basis matrix

\begin{equation} \label{V}
\ba{c}V_1 \\  V_2 \ea
\end{equation}

\noindent having $V_1$ invertible ({\em i.e.} disconjugate). In this case $K=V_1^{-1}V_2$ and it is the {\em unique} solution of (\ref{Riccati}). 
\end{prop}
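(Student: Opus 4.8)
The plan is to recognize that the nonsymmetric Riccati equation (\ref{Riccati}) is the \emph{invariant-subspace} equation for the matrix $A^+$ in (\ref{aplus}), and then to invoke the standard correspondence between solutions of algebraic Riccati equations and graph subspaces. Concretely, suppose $K$ solves (\ref{Riccati}). I would verify by direct block multiplication that
\begin{equation} \label{planeq1}
A^+ \ba{c} I_p \\ K \ea = \ba{c} I_p \\ K \ea \Big( A_{11}+F_1 - A_{12}K \Big),
\end{equation}
so that $\mathrm{Im}\,\ba{c} I_p \\ K \ea$ is an $A^+$-invariant subspace of dimension $p$ on which the restriction of $A^+$ acts as $A_{11}+F_1-A_{12}K$. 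If moreover $K$ is stabilizing, i.e. $A_{11}+F_1-A_{12}K$ is stable, then this is precisely a $p$-dimensional \emph{stable} invariant subspace, and a basis matrix for it has the form (\ref{V}) with $V_1 = I_p$ invertible. That handles the ``only if'' direction, and it already shows such a $K$ must equal $V_1^{-1}V_2$ for the resulting basis.

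For the converse, I would start from a $p$-dimensional stable $A^+$-invariant subspace with basis matrix $\ba{c} V_1 \\ V_2 \ea$, $V_1$ invertible. Invariance means there is a $p\times p$ matrix $S$ with $A^+ \ba{c} V_1 \\ V_2 \ea = \ba{c} V_1 \\ V_2 \ea S$, and stability of the subspace means $\sigma(S)\subset\CC^-$ (or $\mathbb{D}$ in discrete time). Setting $K := V_2 V_1^{-1}$ and post-multiplying the invariance relation by $V_1^{-1}$, the top block row gives $(A_{11}+F_1) - A_{12}K = V_1 S V_1^{-1}$, which is stable (similar to $S$), and the bottom block row gives $-(A_{21}+F_2) + A_{22}K = K V_1 S V_1^{-1} = K\big((A_{11}+F_1)-A_{12}K\big)$; rearranging yields exactly (\ref{Riccati}). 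Thus $K = V_1^{-1}V_2$ — after noting that $V_2 V_1^{-1}$ and $V_1^{-1}V_2$ coincide here because the top block of the basis has been normalized (any basis with $V_1$ invertible can be replaced by $\ba{c} I_p \\ V_2 V_1^{-1} \ea$, which is the intended reading of the statement) — is a stabilizing solution.

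For uniqueness, I would argue that a stable $A^+$-invariant subspace with invertible top block must be \emph{the} stable (or at least a canonically determined) invariant subspace: since any stabilizing $K$ gives, via (\ref{planeq1}), a $p$-dimensional invariant subspace whose associated eigenvalues are exactly the eigenvalues of $A_{11}+F_1-A_{12}K$, all of which are stable, this subspace is contained in the sum of the stable generalized eigenspaces of $A^+$. Two such subspaces of the same dimension $p$, both lying in the stable subspace of $A^+$ and both admitting invertible top block, must agree — here I would lean on the fact that, generically, $A^+$ has exactly $p$ stable eigenvalues (this is the ``generic property'' alluded to in the subsection's preamble) so the stable invariant subspace is unique of dimension $p$; then the disconjugacy (invertibility of $V_1$) pins down $K$ uniquely as $V_1^{-1}V_2$. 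The main obstacle I anticipate is precisely this uniqueness/genericity point: the clean statement requires either assuming $A^+$ has no eigenvalues on the stability boundary and exactly $p$ in the stable region, or phrasing uniqueness as ``unique among stabilizing solutions'' which follows more directly from the bijection between stabilizing $K$ and graph-type stable invariant subspaces. I would also need to handle the improper/discrete-time cases uniformly via Remark~\ref{lambda}, but that is cosmetic once the invariant-subspace picture is in place.
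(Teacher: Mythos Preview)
The paper does not actually prove this proposition; it simply writes ``It follows from \cite{EJC}.'' Your graph--subspace argument for the equivalence is exactly the standard one underlying that reference and is carried out correctly: the block identity (\ref{planeq1}) is right, its converse via $K=V_2V_1^{-1}$ is right, and you correctly flag that the formula $K=V_1^{-1}V_2$ in the statement only matches $V_2V_1^{-1}$ after the basis is normalized so that $V_1=I_p$.

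Where your argument has a genuine gap is the uniqueness step. You fall back on ``generically, $A^+$ has exactly $p$ stable eigenvalues,'' but in this paper's setting that is false. Observe that
\[
A^+ \;=\; \ba{cc} I_p & O \\ O & -I_{n-p}\ea
\ba{cc} A_{11}+F_1 & A_{12} \\ A_{21}+F_2 & A_{22}\ea
\ba{cc} I_p & O \\ O & -I_{n-p}\ea,
\]
so $A^+$ is similar to the state matrix of the stable left coprime factorization (\ref{Kontroller}) and therefore has \emph{all} $n$ eigenvalues stable. This is precisely the point of Remark~\ref{Cristian} immediately after the proposition: every invariant subspace of $A^+$ is stable, so there are in general many $p$--dimensional stable invariant subspaces and hence many stabilizing solutions. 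The ``generic property'' the paper alludes to is the disconjugacy of at least one such subspace (invertibility of $V_1$), not a spectral dichotomy. So the uniqueness clause in the proposition should be read as part of the general Riccati result quoted from \cite{EJC}, where a dichotomic spectrum is in force; your proposed justification of uniqueness does not go through in the all--stable situation treated here, and indeed Remark~\ref{Cristian} implicitly concedes non--uniqueness.
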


\begin{proof} It follows from \cite{EJC}.
\end{proof}

\begin{rem} \label{Cristian} Since in our case $A^+$ is stable, all its invariant subspaces are actually stable
(including the whole space). Therefore, the Riccati equation has a stabilizing solution if and
only if the matrix $A^+$ has an invariant subspace of dimension $p$ which is disconjugate. Hence,
if for example $A^+$ has only simple eigenvalues, the Riccati equation always has a
solution (we can always select $p$ eigenvectors (from the n eigenvectors) to form a disconjugate
invariant subspace). In this case, all we have to do is to order the eigenvalues in a Schur form
such that the corresponding invariant subspace has $V_1$ invertible. Although this is a {\em generic
property}, when having Jordan blocks of dimension greater than one it might happen that the
matrix $A^+$ has no disconjugate invariant subspace of appropriate dimension $p$, and therefore
the Riccati equation has no solution (stable or otherwise).
\end{rem}

\begin{theorem} \label{inversa}
 Given any stable left coprime factorization $L(\la)=M^{-1}(\la)N(\la)$ and its state--space realization (\ref{Kontroller}),  let $K$ be the solution  of the nonsymmetric algebraic Riccati equation (\ref{Riccati}) and denote $A_x\overset{def}{=}(F_1 + A_{11} - A_{12}K)$. Then, a state--space realization for $\displaystyle \ba{cc} M(\la) & N(\la) \\ \ea$ is given by

 \begin{small}
 \begin{equation} \label{MN1}
  \ba{cc} M(\la) & \; N(\la) \ea =  \ba{cc|cc} A_x  -\la I_{n-p}\; \; &  A_{12}  & (A_x - A_{11} +A_{12}K) \; \; & B_1 \\
                           O & A_{22}+KA_{12} -\la I_p & (A_{22}K+KA_{12}K-KA_{11}-A_{21})  & KB_1+B_2 \\ 
                               \hline I & O & I  & O \\
 \ea. 
 \end{equation}
 \end{small}

\noindent Furthermore, from (\ref{MN1})  we can recover the exact expression of the subsequent viable $\Big( W(\la),V(\la) \Big)$ {\em pair} associated with $L(\la)$, where $W(\la)$ and $V(\la)$ are given by (\ref{Wdef}) and (\ref{Vdef}), respectively.
 \end{theorem}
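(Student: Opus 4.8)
The plan is to verify the realization (\ref{MN1}) by a state--equivalence transformation applied to the given realization (\ref{Kontroller}), and then to recognize the resulting formula as the one obtained from Theorem~\ref{Main} after a further equivalence. First I would introduce the block transformation $T = \ba{cc} I_p & O \\ K & I_{n-p} \ea$ (with inverse $\ba{cc} I_p & O \\ -K & I_{n-p} \ea$) and apply it to the state matrix, input matrices and output matrix in (\ref{Kontroller}). Because $T$ acts only on the first $p$ coordinates of the state via $K$, a direct block computation shows that the $(1,1)$ block of the transformed state matrix becomes $A_{11}+F_1-A_{12}K = A_x$, the $(2,1)$ block is annihilated precisely by the Riccati equation (\ref{Riccati}) --- this is the algebraic identity $K(A_{11}+F_1) - A_{22}K - KA_{12}K + (A_{21}+F_2) = O$ used once --- and the remaining blocks reorganize into exactly the pattern displayed in (\ref{MN1}). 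The output row $\ba{cc} I_p & O \ea$ is invariant because $CT^{-1}=C$ on the first block, and the two input columns transform as $F \mapsto TF$ and $B \mapsto TB$, giving the $(A_x - A_{11}+A_{12}K)$, $KB_1+B_2$ and $B_1$ entries. Since a state--equivalence does not change the transfer function, (\ref{MN1}) is a valid realization of $\ba{cc} M(\la) & N(\la) \ea$, and since $A_x$ is stable by the defining property of the stabilizing Riccati solution $K$, the realization exhibits the stable coprime factors with the claimed preserved modes $A_{22}+KA_{12}$.

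For the second assertion, I would compare (\ref{MN1}) with the pair $\ba{cc} M(\la) & N(\la) \ea = \Theta(\la)\ba{cc}\big(\la I_p - W(\la)\big) & V(\la)\ea$ produced in Lemma~\ref{step2}, whose explicit realization is (\ref{MN}). Setting $T_4 = I_p$ in (\ref{MN}) --- equivalently, choosing $\Theta(\la) = (\la I_p - A_x)^{-1}$ as in Remark~\ref{buna} --- the two realizations (\ref{MN}) and (\ref{MN1}) coincide block for block, with the identification $A_x = F_1 + A_{11} - A_{12}K$. Therefore the left coprime factorization in (\ref{MN1}) is of the form covered by Lemma~\ref{step2}, and reading off its structure, $M(\la) = \Theta(\la)\big(\la I_p - W(\la)\big)$ and $N(\la) = \Theta(\la)V(\la)$ with $\Theta(\la) = (\la I_p - A_x)^{-1}$; hence $W(\la) = \la I_p - (\la I_p - A_x) M(\la) = \la I_p - \Theta(\la)^{-1} M(\la)$ and $V(\la) = \Theta(\la)^{-1} N(\la)$ are uniquely recovered, and a final state--space manipulation (eliminating the now-cancellable $A_x - \la I_p$ dynamics common to $\Theta^{-1}$ and the $(1,1)$ block of (\ref{MN1})) collapses these to exactly the descriptor formulas (\ref{Wdef}) and (\ref{Vdef}).

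The main obstacle I anticipate is the bookkeeping in the pole--zero cancellation that extracts $W(\la)$ and $V(\la)$ from $M(\la)$ and $N(\la)$: one must check that premultiplying (\ref{MN1}) by $\Theta(\la)^{-1} = \la I_p - A_x$ --- an improper factor --- genuinely cancels the $A_x$ block of the realization rather than merely relocating it, which is where the \emph{Basic Pole Displacement Result} of \cite{SIMAX} invoked for Lemma~\ref{step2} does the real work, and one must track carefully that the McMillan degree drops back to $n-p$ (for the proper part of $W$) plus the $p$ poles at infinity, matching Definition~\ref{obiectu}. The uniqueness of the associated viable pair then follows from Proposition~\ref{conectia} together with the uniqueness of the stabilizing Riccati solution asserted in Proposition~\ref{Ric}. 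Full details are deferred to Appendix~B.
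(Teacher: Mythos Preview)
Your proposal is correct and follows essentially the same route as the paper: you derive (\ref{MN1}) from (\ref{Kontroller}) and then invoke Lemma~\ref{step2} with $T_4=I_p$ (equivalently $\Theta(\la)=(\la I_p-A_x)^{-1}$) to recover the viable $(W,V)$ pair. The paper's own proof is terser---it simply substitutes $F_1=A_x-A_{11}+A_{12}K$ and $F_2=-KA_x+A_{22}K-A_{21}$ (which is (\ref{F}) with $T_4=I_p$) into (\ref{Kontroller}) and appeals to Lemma~\ref{step2}---but that substitution alone does not produce the block--triangular form (\ref{MN1}); the state--equivalence $T=\ba{cc} I_p & O\\ K & I_{n-p}\ea$ you introduce, together with the Riccati identity (\ref{Riccati}) to annihilate the $(2,1)$ block, is precisely the step the paper leaves implicit (it is the $T_4=I_p$ instance of the transformation already used in Section~4.3), so your argument in fact fills a gap in the paper's exposition.
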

 \begin{proof} For the proof, simply plug
     \begin{equation}  \label{FRicc}
 \ba{c}  F_1 \\ F_2\ea\overset{def}{=} \ba{cc} A_x  - A_{11} + A_{12}K
 \\ -KA_x + A_{22}K - A_{21} \ea
 \end{equation}
\noindent into the expression of (\ref{Kontroller}) in order to obtain (\ref{MN1}). The rest of the proof follows from Lemma~\ref{step2}, by taking $T_4$ to be equal with the identity matrix $I_p$.
 \end{proof}
 
 \begin{rem} We remark here that in general there is no correlation between the sparsity pattern of the stable left coprime (\ref{Kontroller}) we start with and its associated viable $\big( W(\la),V(\la) \big)$ {\em pair} produced in Theorem~\ref{inversa}. That is to say that the converse of the observation made in Remark~\ref{buna} is not valid. This poses additional problems for controller synthesis, since it might happen to encounter stable left coprime factorizations that have no particular sparsity pattern (are dense TFMs) while their associated viable $\big( W(\la),V(\la) \big)$ {\em pair} are sparse. This is due to the fact that in general, the $A_x$ matrix in Theorem~\ref{inversa}  can be a dense matrix. One  way to circumvent this problem would be to use a carefully adapted version of Youla's parameterization in which the stable left coprime factorization to be replaced with a DSF description where both  with $\big( W(\la),V(\la) \big)$ factors are stable. This is the topic of our future investigation.
 \end{rem}
 
\section{Conclusions} 
 In this paper we have presented an exhaustive discussion on the intrinsic  connections between the DSFs associated with a given transfer function and its left coprime factorizations. We have showed that rather than dealing directly with the DSF representation it is more beneficial to work on the so--called viable $\big(W(\la),V(\la) \big)$ pairs associated with a given system. This theoretical results ultimately aim at a method of designing LTI controllers that can be implemented over a network with a pre--specified topology. We currently have sufficient conditions for the existence of such controllers but we miss the necessary conditions. While in general these conditions might be very hard to find, we expect  to find such conditions for plants featuring special DSF structures.

\section*{Appendix A}

\begin{defn} \label{improper} A TFM $L(\la)$ is called {\em improper} if for at least one  of its entries (which are real--rational functions), it holds that the degree of the numerator is strictly larger than the degree of the denominator. 
\end{defn}

\begin{prop} (\cite{Verg79, Verg81}) \label{realization}
Any improper (even polynomial) $p \times m$ rational matrix $L(\la)$ with coefficients in $\FF$  
 has a descriptor
realization of the form 
\bb{a1}
L(\la) = D +  C(\la E - A)^{-1}B =: \ba{c|c} A - \la E & B \\ \hline C & D \ea, 
\ee
where $A, E \in \FF^{n \times n}$, $B \in \FF^{n \times m}$, $C \in \FF^{p \times n}$, $D \in \FF^{p \times m}$, and the so called {\em  pole pencil} $A - \la E$ is {\em regular}, 
i.e., it is square  and det$(A - \la E )  \not\equiv 0$. 
The dimension $n$ of the square matrices $A$ and $E$ is called the {\em order of 
the realization} \rf{a1}.
\end{prop}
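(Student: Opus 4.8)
\noindent The plan is to reduce the claim to two classical facts by splitting $L(\la)$ additively into a strictly proper summand and a polynomial summand, realizing each separately, and then taking the direct sum of the two realizations; regularity of the resulting pole pencil will then be automatic from the block--diagonal structure. First I would perform polynomial division entrywise to write $L(\la)=\sum_{k=0}^{d}P_k\la^k+L_{\mathrm{sp}}(\la)$ with $P_k\in\FF^{p\times m}$ and $L_{\mathrm{sp}}(\la)$ strictly proper, then set $D:=P_0$ and $L_{\mathrm{pol}}(\la):=\sum_{k=1}^{d}P_k\la^k$, so that it only remains to realize $L_{\mathrm{sp}}+L_{\mathrm{pol}}$ by a descriptor system with regular pole pencil and to append the constant $D$ as feedthrough.

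For the proper summand I would invoke the classical realization theorem for proper rational matrices (Gilbert's construction from the partial--fraction expansion, or a controller/observer canonical form), obtaining $A_1\in\FF^{n_1\times n_1}$, $B_1\in\FF^{n_1\times m}$, $C_1\in\FF^{p\times n_1}$ with $L_{\mathrm{sp}}(\la)=C_1(\la I_{n_1}-A_1)^{-1}B_1$; its pole pencil $\la I_{n_1}-A_1$ is regular because $\det(\la I_{n_1}-A_1)$ is a monic polynomial of degree $n_1$. For the polynomial summand I would take $n_2:=m(d+1)$, let $N\in\FF^{n_2\times n_2}$ be the nilpotent block matrix carrying $I_m$ on its first block subdiagonal (so $N^{d+1}=0$), and set $E_2:=N$, $A_2:=I_{n_2}$, $B_2:=\col(I_m,O,\dots,O)\in\FF^{n_2\times m}$, and $C_2\in\FF^{p\times n_2}$ carrying $-P_1,\dots,-P_d$ in its block columns $2,\dots,d+1$. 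Using the finite Neumann expansion $(\la N-I_{n_2})^{-1}=-\sum_{k=0}^{d}\la^kN^k$ and the fact that $N^kB_2$ is the column with $I_m$ in block position $k+1$, a short computation gives $C_2(\la E_2-A_2)^{-1}B_2=\sum_{k=1}^{d}P_k\la^k=L_{\mathrm{pol}}(\la)$; moreover $\det(\la N-I_{n_2})$ is a nonzero constant for every $\la$ (a nilpotent perturbation of $-I_{n_2}$), so this pole pencil is regular as well.

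Finally I would concatenate: with $A:=A_1\oplus A_2$, $E:=I_{n_1}\oplus N$, $B:=\col(B_1,B_2)$, $C:=[\,C_1\;\;C_2\,]$, and $n:=n_1+n_2$, one gets $C(\la E-A)^{-1}B=L_{\mathrm{sp}}(\la)+L_{\mathrm{pol}}(\la)$, hence $D+C(\la E-A)^{-1}B=L(\la)$, which is precisely the descriptor form in the statement; and the pole pencil $\la E-A=(\la I_{n_1}-A_1)\oplus(\la N-I_{n_2})$ is block diagonal with the two regular blocks just produced, so $\det(\la E-A)=\det(\la I_{n_1}-A_1)\,\det(\la N-I_{n_2})\not\equiv0$ and it is regular. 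I expect the only delicate point to be the polynomial--part realization: one must pin down the exact block structure of $N$ together with the index alignment and the sign in $B_2$ and $C_2$ so that the telescoping product $C_2(\la N-I_{n_2})^{-1}B_2$ reproduces $\sum_{k=1}^{d}P_k\la^k$ and not a shifted or sign--flipped variant. Everything else --- the additive splitting, the proper realization, and regularity of the assembled pencil --- is routine bookkeeping.
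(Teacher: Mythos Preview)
Your construction is correct: the additive splitting into a strictly proper part and a polynomial part, the standard state--space realization of the former, the nilpotent--pencil realization of the latter via $(\la N-I)^{-1}=-\sum_{k\ge 0}\la^k N^k$, and the block--diagonal concatenation all go through exactly as you describe, and regularity of the assembled pole pencil follows from the block structure. The index and sign bookkeeping in $C_2(\la N-I_{n_2})^{-1}B_2$ checks out.

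As for comparison with the paper: there is nothing to compare. The paper does not prove this proposition; it merely quotes it from \cite{Verg79,Verg81} as background in Appendix~A, alongside several other realization--theoretic facts that are likewise stated without proof. Your argument is essentially the classical Weierstrass--type decomposition underlying those references, so you have supplied what the paper takes for granted.
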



\begin{defn} The descriptor realization \rf{a1} of $L(\la)$ is called {\em minimal} if 
its order is as small as possible among all realizations of this kind. 
\end{defn}


\begin{defn} \label{McMillanDesc} The {\em McMillan degree}  of $L(\la)$ -- denoted 
$\delta(L)$ -- is the sum  of the orders of all the poles of $L(\la)$ (finite and infinite). 
\end{defn}


\begin{rem} \label{impo} The principal inconvenience of realizations of the form \rf{a1} is 
that their minimal possible order is greater than the McMillan 
degree of $L(\la)$, unless $L(\la)$ is proper, and this brings important 
technical difficulties in factorization problems in which the McMillan degree 
plays a paramount role.  A remedy to this 
is to use a generalization of \rf{a1} in which either the ``$B$" or the ``$C$" matrix is
replaced by a matrix pencil, as stated in the next Proposition. 
\end{rem}

\begin{prop} (\cite{SIMAX}) Any  improper $p \times m$ TFM  $L(\la)$ has a realization 
\bb{a2}
L(\la) = \ba{c|c} A - \la E & B - \la F \\ \hline C & D \ea \overset{def}{=}  D + C(\la E - A)^{-1}(B - \la F), 
\ee
and for any fixed $\alpha, \beta \in \FF$, not both zero, there exists a realization 
\bb{a3}
L(\la) =  \ba{c|c} A - \la E & B(\alpha - \la \beta)\\ \hline C & D \ea\overset{def}{=} D + C(\la E - A)^{-1}B(\alpha - \la \beta), 
\ee 
where $A, E \in \FF^{n \times n}$, $B, F \in \FF^{n \times m}$, $C \in \FF^{p \times n}$, 
$D \in \FF^{p \times m}$, and the  pole pencil $A - \la E$ is regular.  
A realization  \rf{a3} will be called centered at $\frac{\alpha}{\beta}$ 
(if $\beta = 0$ we interpret $\frac{\alpha}{\beta}$ as $\infty$).      
Occasionally, we shall use also the more compact notation 
$L(\la) = (A-\la E,B - \la F ,C,D)$ and $L(\la)= (A-\la E,B(\alpha - \la \beta) ,C,D)$ to denote \rf{a2} and \rf{a3}, respectively. Realizations of type \rf{a3}
 have been dubbed  {\em pencil realizations}.
\end{prop}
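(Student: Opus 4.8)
The plan is to reduce the claim to the descriptor realization already guaranteed by Proposition~\ref{realization} and then to ``re-center'' that realization at $\gamma := \alpha/\beta$. Realization~(\ref{a2}) requires no new work: Proposition~\ref{realization} supplies a regular-pencil descriptor realization $L(\la)=D+C(\la E-A)^{-1}B$, and setting $F=O$ exhibits this as an instance of~(\ref{a2}). The substance of the statement is therefore~(\ref{a3}), the realization whose input coupling is the structured pencil $B(\alpha-\la\beta)$. I would first dispose of the degenerate case $\beta=0$ (so $\gamma=\infty$): there $(\alpha-\la\beta)=\alpha$ is a nonzero constant and~(\ref{a3}) is just the descriptor realization with input matrix $\alpha B$. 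Thus I may assume $\beta\neq0$ and, after absorbing a scalar into $B$, treat $\alpha-\la\beta=\beta(\gamma-\la)$.

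For the generic case in which $\gamma$ is not a (finite) pole of $L$, the key step is a resolvent identity that peels off exactly one factor of $(\la-\gamma)$. Starting from $L(\la)=D_0+C_0(\la E_0-A_0)^{-1}B_0$, I would note that $M_0:=\gamma E_0-A_0$ is invertible precisely because $\gamma$ is not a generalized eigenvalue of the (minimal) pole pencil, and write $\la E_0-A_0=M_0\big(I+(\la-\gamma)\widetilde E\big)$ with $\widetilde E:=M_0^{-1}E_0$. Applying $(I+\mu\widetilde E)^{-1}=I-\mu\widetilde E(I+\mu\widetilde E)^{-1}$ with $\mu=\la-\gamma$ and collecting the constant term into $D:=D_0+C_0M_0^{-1}B_0=L(\gamma)$ yields $L(\la)=D+C(\la E-A)^{-1}B(\alpha-\la\beta)$ with the explicit choices $E=\widetilde E$, $A=\gamma\widetilde E-I$, $C=C_0\widetilde E$ and $B=\beta^{-1}M_0^{-1}B_0$. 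Regularity of $A-\la E=(\gamma-\la)\widetilde E-I$ is immediate, since its determinant equals $(-1)^n\neq0$ at $\la=\gamma$ and hence is not identically zero. This is exactly the centered realization~(\ref{a3}).

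The hard part will be the exceptional case in which $\gamma$ \emph{is} a pole of $L$, since then $M_0$ is singular and the identity above breaks down; here I would split $L=L_\gamma+L_r$ into its principal part at $\gamma$ and a remainder $L_r$ having no pole at $\gamma$. The remainder is handled by the construction of the previous paragraph, while $L_\gamma=\sum_{k\ge1}R_k(\la-\gamma)^{-k}$ is realized directly in centered form by absorbing $\alpha-\la\beta=-\beta(\la-\gamma)$ into a proper realization whose pole pencil carries a single Jordan block at $\gamma$ of order one larger than the pole order; stacking the two pencil realizations in parallel preserves both regularity and the structured input coupling. The delicate point throughout is the bookkeeping of the pole pencil's eigenstructure at $\gamma$ and at $\infty$, so that the assembled pencil stays regular; I would also remark that the genuinely useful sharpening---that~(\ref{a2}) and~(\ref{a3}) can be taken of minimal order $\delta(L)$, which is the real motivation behind Remark~\ref{impo}---needs the finer staircase construction of \cite{SIMAX} rather than the elementary argument sketched here.
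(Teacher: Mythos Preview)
The paper does not prove this proposition at all: it is stated in Appendix~A with the citation ``(\cite{SIMAX})'' and no argument, as part of a primer quoting background results from the literature. There is therefore nothing to compare your proposal against in this paper.

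That said, your construction is a sound self-contained existence proof. The reduction of \rf{a2} to Proposition~\ref{realization} via $F=O$ is immediate, and for \rf{a3} in the generic case your resolvent identity is correct: with $M_0=\gamma E_0-A_0$ invertible and $\widetilde E=M_0^{-1}E_0$ one has $\la E_0-A_0=M_0\big(I+(\la-\gamma)\widetilde E\big)$, and peeling off one factor of $(\la-\gamma)$ gives exactly $L(\la)=L(\gamma)+C_0\widetilde E\big(\la\widetilde E-(\gamma\widetilde E-I)\big)^{-1}\beta^{-1}M_0^{-1}B_0(\alpha-\la\beta)$, which is of type \rf{a3} with a regular pole pencil (its value at $\la=\gamma$ is $-I$). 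Your treatment of the exceptional case $\gamma\in\sigma(L)$ by splitting off the principal part, realizing $\sum_k R_k(\la-\gamma)^{-k}=(\la-\gamma)\sum_k R_k(\la-\gamma)^{-(k+1)}$ with the scalar factor absorbed into $(\alpha-\la\beta)$, and stacking in parallel is also correct; the block-diagonal pole pencil is regular because each block is. The only caveat is the one you already flag: to guarantee $M_0$ is invertible when $\gamma$ is not a pole of $L$, you must start from a \emph{minimal} descriptor realization so that the finite generalized eigenvalues of $A_0-\la E_0$ coincide with the finite poles of $L$; this is available but should be stated explicitly rather than parenthetically. Your closing remark that the minimal-order claim (Proposition~\ref{m4}) requires the finer machinery of \cite{SIMAX} is accurate and appropriately separates existence from minimality.
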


\begin{defn}(\cite{SIMAX}) \label{m3} We call  realizations of the type \rf{a2} or \rf{a3} {\em minimal} if the
dimension of the square matrices $A$ and $E$ (also called the order of the realization) 
is as small as possible  among all realizations of the respective kind. 
\end{defn}

\begin{prop} (\cite{SIMAX})  \label{m4} Any TFM $L(\la)$ has a minimal realization of type
\rf{a2} of order equal to $\delta(L)$. For any fixed $\alpha$ and $\beta$, not both zero,  
and such that  $\frac{\alpha}{\beta}$ is not a pole of $L(\la)$ there also exists a minimal 
realization of type \rf{a3} of order equal to $\delta(L)$. 
The condition imposed on
$\frac{\alpha}{\beta}$ is needed only for writing down minimal realizations \rf{a3} which have 
order equal to  $\delta(L)$. More precisely, 
even if $\frac{\alpha}{\beta}$ is a pole of $L(\la)$ we can still write a realization 
\rf{a3} but the minimal order will with necessity be greater than $\delta(L)$. 
This is exactly what is happening for realizations \rf{a1} which are obtained 
from \rf{a3} for $\alpha = 1$ and $\beta = 0$, and for which the minimal order 
is necessary greater than $\delta(L)$, provided 
$\frac{\alpha}{\beta} = \infty $ is a pole of $L(\la)$. Notice that for 
\rf{a3} we can always choose freely $\alpha$ and $\beta$ such as 
to ensure $\frac{\alpha}{\beta}$ is not a pole of $L(\la)$. For the rest of the paper, 
if not otherwise stated, we assume this choice implicitly. 
The nice feature of \rf{a2} and \rf{a3} that their minimal order  
equals the McMillan degree of $L(\la)$ recommends them for the kind of problems 
treated in this paper.     
\end{prop}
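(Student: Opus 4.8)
The plan is to prove the two equalities by a squeeze argument: first establish the universal lower bound $n\ge\delta(L)$ valid for every realization of the types \rf{a2} and \rf{a3}, and then construct, in each case, a realization whose order attains this bound. For the lower bound, recall the classical fact (\cite{Rose70,Verg79,Verg81,SIMAX}) that when $\la E-A$ is a regular $n\times n$ pencil, the finite poles of $L(\la)$ occur among the finite elementary divisors of $\la E-A$ while the pole of $L(\la)$ at infinity, if any, is controlled by the infinite elementary divisors; since the total degree of all elementary divisors (finite and infinite) of a regular $n\times n$ pencil is exactly $n$, this gives $\delta(L)\le n$ for any realization \rf{a2}, hence also for any realization \rf{a3}, the latter being the particular case of \rf{a2} with $B-\la F=B(\alpha-\la\beta)$. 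Consequently a realization of type \rf{a2} or \rf{a3} is minimal in the sense of Definition~\ref{m3} precisely when its order equals $\delta(L)$, so it suffices to exhibit one such realization in each situation.

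For the unrestricted type \rf{a2} I would split $L(\la)=L_{\mathrm{p}}(\la)+L_{\infty}(\la)$ into its proper part $L_{\mathrm{p}}$ (all finite poles, plus the value of $L$ at infinity) and a polynomial matrix $L_{\infty}$ with zero constant term. The classical state--space realization theory for proper rational matrices furnishes a standard minimal realization $(\la I-A_1,B_1,C_1,D)$ of $L_{\mathrm{p}}$ of order $\delta(L_{\mathrm{p}})$, which is of type \rf{a2} with $F=0$. For $L_{\infty}$, pass to the reciprocal variable: $R(\mu):=L_{\infty}(1/\mu)$ is strictly proper with its only pole at $\mu=0$, so it has a standard minimal realization $(\mu I-A_2,B_2,C_2,0)$ with $A_2$ nilpotent and of order $\delta_\infty(L)$, the order of the pole of $L$ at infinity; putting $\mu=1/\la$ back yields $L_{\infty}(\la)=C_2(I-\la A_2)^{-1}(\la B_2)$, a realization of type \rf{a2} whose pole pencil $I-\la A_2$ is regular with only infinite elementary divisors. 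Their direct sum is a realization of $L(\la)$ of type \rf{a2} and of order $\delta(L_{\mathrm{p}})+\delta_\infty(L)=\delta(L)$, hence minimal.

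For type \rf{a3} centered at a point $\alpha/\beta$ that is not a pole of $L(\la)$, the idea is to remove the pole at infinity by a M\"obius change of variable. Choose a M\"obius transformation $g$ with $g(\infty)=\alpha/\beta$; since $\alpha/\beta$ is not a pole of $L$, the rational matrix $\widetilde L(\mu):=L\big(g(\mu)\big)$ is finite at $\mu=\infty$, i.e.\ proper, and hence admits a standard minimal realization $\widetilde D+\widetilde C(\mu I-\widetilde A)^{-1}\widetilde B$ of order $\delta(\widetilde L)$. A M\"obius transformation is a biholomorphism of the Riemann sphere, so it carries the poles of $L$ bijectively onto those of $\widetilde L$ with the same local McMillan degrees, whence $\delta(\widetilde L)=\delta(L)$. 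Now recover $L(\la)=\widetilde L\big(g^{-1}(\la)\big)$; writing $g^{-1}(\la)=\frac{p\la+q}{s\la+t}$ and using that $g^{-1}(\alpha/\beta)=\infty$ forces $s\la+t$ to be proportional to $\alpha-\la\beta$, one clears the denominator of $\big(g^{-1}(\la)I-\widetilde A\big)^{-1}$ to obtain
\[
  L(\la)=\widetilde D+\widetilde C\big(\la E-A\big)^{-1}B(\alpha-\la\beta),
\]
with $\la E-A=(pI-s\widetilde A)\la+(qI-t\widetilde A)$ a regular $\delta(L)\times\delta(L)$ pencil — its determinant is the nonzero polynomial $(s\la+t)^{\delta(L)}\chi_{\widetilde A}\big(g^{-1}(\la)\big)$, $\chi_{\widetilde A}$ denoting the characteristic polynomial of $\widetilde A$ — and $B$ absorbing a scalar factor. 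This realization has order $\delta(L)$ and is therefore minimal; carrying out the same construction at any non-pole point gives, incidentally, a second proof of the first assertion, since \rf{a3} is a particular form of \rf{a2}.

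Finally, to show that when $\alpha/\beta$ is a pole of $L(\la)$ every realization \rf{a3} (one still exists unconditionally, by the preceding Proposition) has order strictly larger than $\delta(L)$, I would count partial multiplicities at $\la_0:=\alpha/\beta$. When $\beta\ne0$ the numerator pencil $B(\alpha-\la\beta)$ has a simple zero at $\la_0$, so for $L(\la)$ to have a pole of local McMillan degree $r\ge1$ at $\la_0$ the resolvent $(\la E-A)^{-1}$ must already have a pole of order at least $r+1$ there, which forces $\la E-A$ to carry an elementary divisor at $\la_0$ of degree at least $r+1$; for $\beta=0$, i.e.\ $\la_0=\infty$, the same conclusion follows from the requirement that $C(\la E-A)^{-1}B$ have a polynomial part of degree $r$, forcing an infinite elementary divisor of degree at least $r+1$. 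Adding to this the degrees of the elementary divisors at all other poles of $L$, each at least the order of the corresponding pole, one gets $n\ge(r+1)+(\delta(L)-r)=\delta(L)+1$. The special case $\alpha=1$, $\beta=0$ recovers the statement recalled in Remark~\ref{impo} about realizations \rf{a1}. I expect the main obstacle to be exactly this last step: making rigorous the implication ``a simple zero of the numerator pencil together with a pole of $L$ of order $r$ forces $r+1$ generalized eigenvalues of $\la E-A$ at $\la_0$'' requires the local Smith--McMillan analysis of the system pencil attached to \rf{a3} together with the Weierstrass structure of regular pencils; every other step reduces to the classical proper--case minimal realization theory and to the invariance of the McMillan degree under M\"obius transformations.
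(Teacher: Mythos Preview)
The paper does not supply its own proof of Proposition~\ref{m4}: the result is quoted from \cite{SIMAX} and stated in Appendix~A as background, with no accompanying argument. There is therefore nothing in the paper to compare your proposal against.

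That said, your outline is the standard route and is essentially the one taken in \cite{SIMAX}: the lower bound $n\ge\delta(L)$ via the elementary--divisor count of the regular pole pencil, the direct--sum construction $L=L_{\mathrm p}+L_\infty$ for realizations of type \rf{a2}, and a M\"obius change of variable sending $\infty$ to the non-pole $\alpha/\beta$ to reduce type \rf{a3} to the proper case. Your final step --- forcing an extra unit of degree in the elementary divisor at $\la_0=\alpha/\beta$ when $\la_0$ is a pole of $L$ --- is also the right mechanism; the rigorous version is precisely the local Smith--McMillan analysis of the system pencil that you flag as the likely technical bottleneck, and it is carried out in \cite{SIMAX} (see also \cite{Verg79}). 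So your proposal is sound and matches the argument in the cited source, even though the present paper itself omits the proof.
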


\begin{prop} (\cite{SIMAX}) \label{m5} A given
realization of type \rf{a2} of a TFM $L(\la)$ is minimal if and only if all of the following conditions hold true  
\begin{subequations}
\label{minim2}
\begin{eqnarray}
\rank \ba{cc} A - \la E & B - \la F \ea  &=& n, \quad
\forall \la \in \CC,  \\ 
\rank \ba{cc}  E & F \ea  &=& n, \\ 
\rank \ba{c} A - \la E \\ C \ea &=& n, \quad
\forall \la \in \CC,  \label{w1} \\ 
\rank \ba{c} E \\ C \ea &=& n, \label{w2}
\end{eqnarray}
\end{subequations}
while for realizations of type \rf{a3} similar conditions  result by 
simply replacing (a) and (b) in \rf{minim2} with 
\begin{subequations}
\label{minim3} 
\begin{eqnarray}
\rank \ba{cc} A - \la E & B (\alpha - \la \beta) \ea  &=& n, \quad
\forall \la \in \CC, \label{w3} \\ 
\rank \ba{cc}  E & B \ea  &=& n. \label{w4} 
\end{eqnarray}
\end{subequations}
\end{prop}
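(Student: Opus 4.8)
The plan is to read the four rank conditions in \rf{minim2} as the generalized Popov--Belevitch--Hautus tests attached to the pencil realization \rf{a2}: conditions (a) and (b) express controllability at the finite points of $\CC$ and at infinity, respectively, while (c) and (d) express observability at the finite points and at infinity. With this dictionary, one proves minimality $\Leftrightarrow$ (a)--(d) in the two usual directions. The realization \rf{a3} is then subsumed at once: setting $F := \beta B$ turns $B(\alpha - \la\beta)$ into $B - \la F$, under which \rf{w3} is precisely condition (a) of \rf{minim2}, and, since $\frac{\alpha}{\beta}$ is not a pole so that $B(\alpha - \la\beta)$ and $B$ span the same column space away from $\frac{\alpha}{\beta}$, \rf{w4} reduces to condition (b); hence it suffices to settle \rf{a2}.

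For the \emph{necessity} direction (minimal $\Rightarrow$ (a)--(d)) I would argue by contraposition, exhibiting an order reduction whenever one condition is violated. If (a) fails at a finite $\la_0$, the substitution $\la \mapsto \la + \la_0$ stays inside the class \rf{a2} (it replaces $A$ by $A - \la_0 E$ and $B$ by $B - \la_0 F$) and reduces us to the case $\rank \ba{cc} A & B \ea < n$; choosing $v \neq 0$ with $v^{T}A = 0$ and $v^{T}B = 0$ and applying the equivalence $(E,A,B,F,C,D)\mapsto(PE,PA,PB,PF,C,D)$ with $P$ invertible whose last row is $v^{T}$, the last rows of $PA$ and $PB$ both vanish, regularity of the pole pencil forces the last row of $PE$ to be nonzero, so the last descriptor equation is a genuine constraint which can be solved for one state component and eliminated, yielding a realization of type \rf{a2} of order $n-1$ --- a contradiction. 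If (b) fails, the same bordering construction applied to a left null vector of $\ba{cc} E & F \ea$ removes a state attached to a generalized pole at infinity; conditions (c) and (d) follow by transposing and applying the above to $L(\la)^{T}$.

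For the \emph{sufficiency} direction I would pass to the Weierstrass canonical form $S(A - \la E)T = \diag(A_1 - \la I,\; I - \la N)$ with $N$ nilpotent, transporting $S(B - \la F) = \col(B_1 - \la F_1,\; B_2 - \la F_2)$ and $CT = \ba{cc} C_1 & C_2 \ea$. Since $I - \la N$ is invertible at every finite $\la$, the four conditions decouple across the two blocks: using $B_1 - \la F_1 = (B_1 - A_1 F_1) - (\la I - A_1)F_1$, condition (a) amounts to controllability of $(A_1,\, B_1 - A_1 F_1)$ and (c) to observability of $(C_1, A_1)$, so the proper part of $L(\la)$ is minimally realized with $\dim A_1$ equal to the total order of the finite poles of $L(\la)$; while (b) and (d) become $\rank \ba{cc} N & F_2 \ea = \dim N$ and $\rank \ba{c} N \\ C_2 \ea = \dim N$, which under the change of variable $\mu = 1/\la$ are exactly controllability and observability of the standard $\mu$-realization $-C_2(\mu I - N)^{-1}(F_2 - \mu B_2)$ of the polynomial part of $L(\la)$, so $\dim N = \delta_\infty(L)$. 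As the two summands have no common pole, $\delta(L) = \dim A_1 + \dim N = n$, and minimality follows from Proposition~\ref{m4}.

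I expect the main obstacle to be the bookkeeping at infinity in both directions: making the order reduction rigorous, and confined to the class \rf{a2}, when (b) or (d) is the offending condition; and proving the ``minimality at infinity'' step --- that $\dim N$ coincides with $\delta_\infty(L)$ precisely when the compressed pencils $\ba{cc} N & F_2 \ea$ and $\ba{c} N \\ C_2 \ea$ have full rank --- for which the $\mu = 1/\la$ substitution together with a careful computation of the McMillan degree at infinity is needed. One must also verify that the Weierstrass change of coordinates creates no spurious pole/zero cancellation against the feedthrough $D$; this is immediate since the proper part and the polynomial part live over disjoint subsets of the extended complex plane. The complete details can be found in \cite{SIMAX}.
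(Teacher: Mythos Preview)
The paper does not prove this proposition: it is quoted from \cite{SIMAX} (note the attribution in the statement itself) and is invoked only as a black box in the minimality checks inside the proofs of Theorem~\ref{CupruMin} and Lemma~\ref{step2}. There is therefore no in-paper argument to compare your attempt against.

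Your sketch is the standard proof and is correct. The only place that deserves an extra line is the order-reduction step in the necessity direction: once the last rows of $PA$ and $PB$ vanish and you pick a right transformation $T$ with $e^{T}T=e_n^{T}$, you should verify that the reduced pole pencil (the leading $(n{-}1)\times(n{-}1)$ block of $PAT-\la\,PET$) is again regular --- expanding $\det(PAT-\la\,PET)$ along its last row $[0,\dots,0,-\la]$ gives $-\la$ times that subdeterminant, which therefore cannot vanish identically --- and that the substituted input matrix is still affine in $\la$, so the reduced realization remains of type \rf{a2}. The dual computation handles case (b). Your sufficiency argument via the Weierstrass split and the $\mu=1/\la$ change of variable on the nilpotent block is exactly right; in particular $\rank\,[\,N\ \ F_2\,]=\rank\,[\,N\ \ F_2-NB_2\,]$ because $NB_2$ lies in the column span of $N$, which is what makes your identification with the PBH test for $(N,\,F_2-NB_2)$ go through.
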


\begin{prop} Any  two minimal realizations 
$L(\la) = (A-\la E,B(\alpha - \la \beta),C,D)$ and
$L(\la) = (\wti A-\la \wti E,\wti B(\alpha - \la \beta),\wti C,\wti D)$ are
always related by an equivalence transformation as  
\begin{equation} \label{DescEquiv} \wti E = QEZ, \quad \wti A = QAZ, \quad \wti B = QB, \quad
\wti C = CZ, \quad \wti D = D,
\end{equation}
where $Q$ and $Z$ are unique invertible matrices.  
\end{prop}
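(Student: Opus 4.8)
The plan is to reduce the statement to the classical fact that two minimal ordinary (proper) state--space realizations of the same transfer function matrix are related by a unique similarity, and then to transport that conclusion back to the pencil form \rf{a3}. First I would record that, both realizations being minimal of type \rf{a3} centered at $\mu_0:=\alpha/\beta$, Proposition~\ref{m4} forces a common order $n=\delta(L)$, so the sought $Q$ and $Z$ must be $n\times n$. Since $\mu_0$ is, by hypothesis, not a pole of $L(\la)$ and the realizations are minimal (no uncancelled finite or infinite modes survive), the pole pencil evaluated at $\mu_0$, namely $\mu_0E-A$, is invertible; set $R:=(\mu_0E-A)^{-1}$, $N:=RE$, and likewise $\wti R,\wti N$. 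Evaluating at $\la=\mu_0$, where $\alpha-\mu_0\beta=0$, gives at once $D=L(\mu_0)=\wti D$.

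Next I would expand $L$ about $\mu_0$. From $\la E-A=(\mu_0E-A)\big(I+(\la-\mu_0)N\big)$ and $\alpha-\la\beta=-\beta(\la-\mu_0)$ one obtains the Taylor series
\[
L(\la)=D-\beta\sum_{k\ge 0}(-1)^k(\la-\mu_0)^{k+1}\,CN^kRB,
\]
and the same expansion with tildes; equating coefficients yields $CN^kRB=\wti C\wti N^{\,k}\wti R\wti B$ for all $k\ge 0$. The minimality conditions \rf{w1}, \rf{w2}, \rf{w3}, \rf{w4} of Proposition~\ref{m5}, after multiplication by the invertible $R$, say exactly that $\col(C,CN,CN^2,\dots)$ has full column rank $n$ and $\ba{cccc}RB & NRB & N^2RB & \cdots\ea$ has full row rank $n$. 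Hence the Ho--Kalman argument applies to the triple $(N,RB,C)$ and produces a \emph{unique} invertible $T\in\FF^{n\times n}$ with $\wti N=TNT^{-1}$, $\wti R\wti B=TRB$, and $\wti C=CT^{-1}$. I would set $Z:=T^{-1}$; observability forces this, since any admissible $Z$ must satisfy $CN^kT^{-1}=\wti C\wti N^{\,k}=CN^kZ$ for all $k$.

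It remains to produce $Q$ and to verify the five identities. I would define $Q$ as the unique solution of $Q\,\ba{cc}EZ & B\ea=\ba{cc}\wti E & \wti B\ea$; this determines $Q$ uniquely because $\ba{cc}EZ & B\ea$ has full row rank $n$ by \rf{w4}, so at most one $Q$ exists, and the \emph{existence} of such a $Q$ --- equivalently, that $\ba{cc}EZ & B\ea$ and $\ba{cc}\wti E & \wti B\ea$ have the same row space --- is the one nontrivial point, which I expect to be the main obstacle. This is precisely the statement that the strict system equivalence already forced at the finite point $\mu_0$ is compatible with the structure at $\la=\infty$ encoded by conditions \rf{w2} and \rf{w4}; concretely it follows by additionally matching the leading behaviour of $L$ at infinity, which the minimality conditions tie to $E$, $B$, $C$. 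Granting this, a short computation using $A=\mu_0E-R^{-1}$, $N=RE$, $\wti N=TNT^{-1}$ and $\wti R\wti B=TRB$ gives $\wti E=QEZ$, $\wti A=QAZ$, $\wti B=QB$, $\wti C=CZ$, $\wti D=D$; and uniqueness of $(Q,Z)$ follows from uniqueness of $T$ together with the full--rank reachability and observability pencils (if $(Q_1,Z_1)$ and $(Q_2,Z_2)$ both work, then $Z_1^{-1}Z_2$ commutes with $N$ and fixes $C$, so $Z_1=Z_2$, and then $Q_1=Q_2$ from the full row rank of $\ba{cc}EZ & B\ea$).

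A cleaner and more robust route for the obstacle step is to bypass it altogether: apply a M\"obius change of the indeterminate carrying $\mu_0$ to $\infty$. This turns \rf{a3} into an ordinary minimal state--space realization of a proper TFM $\wti L$ --- minimality being preserved because the change of variable merely permutes the finite and infinite pole/zero structure and respects the rank tests of Proposition~\ref{m5} --- so the classical uniqueness theorem for minimal proper realizations \cite{Verg79, Verg81, PVD1} supplies a unique similarity transformation, and undoing the substitution delivers $Q$, $Z$ and their uniqueness directly, with $\wti D=D$ coming out automatically since $\mu_0$ maps to the evaluation point of the constant term.
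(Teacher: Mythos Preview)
The paper does not supply a proof of this proposition: it appears in Appendix~A as a background fact about pencil realizations, inherited from \cite{SIMAX}, and no argument for it is given anywhere in the paper. So there is nothing to compare your attempt against.

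On the substance of your attempt: your first route is more complete than you give it credit for. Once Ho--Kalman has produced the unique invertible $T$ with $\wti N=TNT^{-1}$, $\wti R\wti B=TRB$, $\wti C=CT^{-1}$, simply \emph{define} $Q:=\wti R^{-1}TR$ and $Z:=T^{-1}$. Then $\wti B=QB$ is immediate from $\wti R\wti B=TRB$; since $RA=\mu_0N-I$ and $RE=N$, one gets $QEZ=\wti R^{-1}TNT^{-1}=\wti R^{-1}\wti N=\wti E$ and $QAZ=\wti R^{-1}T(RA)T^{-1}=\wti R^{-1}(\mu_0\wti N-I)=\mu_0\wti E-(\mu_0\wti E-\wti A)=\wti A$. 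So the ``main obstacle'' you flag dissolves: $Q$ is constructed explicitly and is invertible, no separate row--space argument is needed. Your M\"obius alternative is also sound and has the advantage of covering the case $\beta=0$ (realization centered at $\infty$), which the Taylor expansion about $\mu_0=\alpha/\beta$ tacitly excludes.
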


\section*{Appendix B}

{\bf Proof of Theorem~\ref{Main}}  We prove that any pair $\displaystyle \Big( W(\la), V(\la) \Big)$ given by (\ref{Wdef}),(\ref{Vdef}) satisfies (\ref{WV}). We start with the equations (\ref{Lapla})

\begin{subequations} 
\begin{align}
\ba{cc} \la I_p -A_{11} & - A_{12}  \\ 
                             -A_{21}  &\la I_{n-p} - A_{22} \\  \ea \ba{c} Y(\la)  \\ Z(\la)  \ea &=  \ba{c} B_1 \\ B_2 \ea U(\la) & \label{oo1} \\
 Y(\la) = \ba{cc} I_p & O\ea \ba{c} Y(\la)  \\Z(\la)   \ea & & \label{oo2}
\end{align}
\end{subequations}

\noindent and apply a type (\ref{similea}) state equivalence transformation  with

\begin{equation}  \label{T}
T=\ba{cc}  I_p & O \\
                   K & I_{n-p}  \ea 
\end{equation}

\noindent where  $K$ can be  any matrix  in $\mathbb{R}^{(n-p) \times p}$, in order to get 

\begin{small}
\begin{subequations} 
\begin{align}
 \ba{cc} \la I - (A_{11} -A_{12}K)  &  -A_{12} \\  (- KA_{11} - A_{21} + A_{22}K +KA_{12}K)  \;\; & \la I -(A_{22}+KA_{12})  \ea \ba{c} Y(\la)  \\ K Y(\la) +Z(\la)   \ea   & = \ba{c} B_1 \\ KB_1+B_2  \ea U(\la) &  \label{intermedia} \\
  Y(\la)= \ba{cc} I_p  & O \ea \ba{c} Y(\la) \\   K Y(\la) +Z(\la)    \ea & & \label{intermedia2}         
\end{align}  
\end{subequations}
 \end{small}

 \noindent respectively.  In a similar manner with getting from (\ref{Lapla})  to (\ref{WV1}) via (\ref{Omega_i}),  we multiply (\ref{intermedia}) to the left with the following invertible factor
 
 \begin{equation} \label{OmegaInterim}
 \Omega_K (\la) = \ba{ccc} I_{p} & \; \;  A_{12} \Big(\la I_{n-p} - (A_{22}+KA_{12})\Big)^{-1}  \\ 
                                           O   &     I _{n-p}    \ea
 \end{equation}
 
 \noindent After the multiplication is performed, the first block row of the resulting equation yields $\displaystyle \big(\la I_p - -W(\la)\big) Y(\la)=V(\la)U(\la)$ which is exactly (\ref{WV}) with $W(\la)$ and $V(\la)$ having the expressions in (\ref{Wdef}) and (\ref{Vdef}), respectively. Finally, from the expression of $W(\la)$ in (\ref{Wdef}), clearly the McMillan degree of $W(\la)$ cannot exceed $(n-p)$.



{\bf Proof of Theorem~\ref{CupruMin}}  An equivalent condition for the pair $\big(\la I_p -W(\la),V(\la) \big)$ to be coprime (over the compactification of $\mathbb{C}$) is for the compound transfer function matrix  
\begin{equation} \label{locala1}
 \ba{cc} \Big( \la I_p -W(\la) \Big) & \; V(\la) \ea
\end{equation}
\noindent to have no (finite or infinite)  Smith zeros (see \cite{Rose70, V, SIMAX} for equivalent characterizations of left coprimeness). According to \cite[Theorem~2.1]{SIMAX} (see also \cite{Rose70, Verg79}) the Smith zeros of (\ref{locala1}) are among the Smith zeros  (generalized eigenvalues) of the system--pencil of any minimal realization of (\ref{locala1}). Hence we break this proof in two distinct parts: in part {\bf I)} we compute a type (\ref{a3}) {\em pencil realization} for (\ref{locala1}) and prove that is indeed minimal, in the sense of Definition~\ref{m5}. In part {\bf II)} of the proof we show that the system-pencil of the minimal realization from part {\bf I)} has no finite of infinite Smith zeros (generalized eigenvalues).

 {\bf I)} We will show that the following type (\ref{a3}) {\em pencil realization} for $\displaystyle \ba{cc} \big( \la I-W(\la) \big) & \; V(\la) \ea$ is a {\em minimal} realization in the sense of Definition~\ref{m5}:
 
 \begin{small}
 \begin{equation} \label{coprime}
\ba{cc} \big( \la I-W(\la) \big) & \; V(\la) \ea =   \ba{cc|cc} (A_{22}+ K A_{12})  -\la I_{n-p}\; \; & O  & (A_{22}K + KA_{12}K -K A_{11} - A_{21}) \; \; & KB_1+B_2 \\
                           O & I_p & I_p (\la_o - \la)  & O \\ 
                               \hline A_{12} & I_p & \la_o I_p -A_{11} + A_{12}K  & B_1 \\
 \ea 
 \end{equation}
\end{small}

\noindent  {\bf I a)} {\em Observability  for any finite $\la \in \mathbb{C}$}  We note that
 \[
 \ba{cc} (A_{22}+ K A_{12})  -\la I_{n-p}\; \; & O \\
 O  & I_p \\
  A_{12} & I_p \ea =   \ba{ccc} I & O & K \\ O & I & -K \\ O & O & I  \ea \ba{cc} A_{22}  -\la I_{n-p}\; \; & O \\
 O  & I_p \\
  A_{12} & I_p \ea
 \]

 \noindent where the right hand side has full column rank for any $\la \in \mathbb{C}$, due to the observability of the pair $(A_{12}, A_{22})$ (from Assumption~\ref{Observability}).  Hence point (\ref{w1}) of Definition~\ref{m5} holds via the Popov--Belevitch--Hautus (PBH) criterion.
 
 \noindent   {\bf I b)} {\em Observability  at $\la = \infty$} is equivalent via  point (\ref{w2}) of Definition~\ref{m5} with the following matrix having full column rank  
 \[
 \ba{cc} I & O \\ O & O \\ A_{12} & I \ea.
 \]
 
 \noindent {\bf I c)} {\em Controllability  for any finite $\la \in \mathbb{C}$} We look at the following succession of equivalent singular matrix pencils 
 
 \[
 \ba{cccc} (A_{22}+ K A_{12})  -\la I_{n-p}\; \; & O  & (A_{22}K + KA_{12}K -K A_{11} - A_{21}) \; \; & KB_1+B_2 \\
                           O & I_p & I_p (\la_o - \la)  & O \ea \sim
 \]
 
  \[
 \ba{cccc} (A_{22}+ K A_{12})  -\la I_{n-p}\; \; & O  & (\la K  -K A_{11} - A_{21}) \; \; & KB_1+B_2 \\
                           O & I_p & I_p (\la_o - \la)  & O \ea \sim
 \]
 
   \[
 \ba{cccc} (A_{22}+ K A_{12})  -\la I_{n-p}\; \; & K  & (\la_o K  -K A_{11} - A_{21}) \; \; & KB_1+B_2 \\
                           O & I_p & I_p (\la_o - \la)  & O \ea \sim
 \]
 
    \[
 \ba{cccc} (A_{22}+ K A_{12})  -\la I_{n-p}\; \; & K  & (-K A_{11} - A_{21}) \; \; & KB_1+B_2 \\
                           O & I_p &-\la  I_p   & O \ea \sim
 \]
 
     \[
 \ba{cccc} (A_{22}+ K A_{12})  -\la I_{n-p}\; \; & K  &  - A_{21} \; \; & KB_1+B_2 \\
                           O & I_p &-\la  I_p + A_{11}  & O \ea \sim
 \]
 
      \[
 \ba{cccc} (A_{22}+ K A_{12})  -\la I_{n-p}\; \; & K  &  - A_{21} \; \; & \; \; B_2 \\
                           O & I_p &-\la  I_p + A_{11}  &\; \; -B_1 \ea \sim
 \]

\begin{equation} \label{locala4}
 \ba{cccc} A_{22} -\la I_{n-p}\; \; & K  &  - A_{21} \; \; & \; \; B_2 \\
                           -A_{12} & I_p &-\la  I_p + A_{11}  &\; \; -B_1 \ea \sim \ba{cccc} \la I_p -A_{11} & - A_{12} & B_1 & -I_p\\
                            -A_{21}  &\la I_{n-p} - A_{22}  & B_2  & K\\  \ea 
\end{equation}

 \noindent The full row rank of the last pencil above for any $\la \in \mathbb{C}$, follows from the controlability Assumption~\ref{Controlability} and the PBH criterion and it fulfills point (\ref{w3}) of Definition~\ref{m5}.

 \noindent {\bf I d)} {\em Controlability  at $\la = \infty$:}  is equivalent via  point (\ref{w4}) of Definition~\ref{m5} with the following matrix having full row rank
 \[
 \ba{cccc} I_p & O & O & O \\ O & O & I_{n-p} & O \ea.
 \]

 {\bf II)} We look at the system--pencil of the realization (\ref{coprime}), namely
 
 \begin{small}
 \begin{equation} \label{mare}
 \mathcal{S}(\la) \overset{def}{=} \ba{cccc} (A_{22}+ K A_{12})  -\la I_{n-p}\; \; & O  & (A_{22}K + KA_{12}K -K A_{11} - A_{21}) \; \; & KB_1+B_2 \\
                           O & I_p & I_p (\la_o - \la)  & O \\ 
                              A_{12} & I_p & \la_o I_p -A_{11} + A_{12}K  & B_1 \\
 \ea 
 \end{equation}
 \end{small}
 
\noindent  We will show next that the singular pencil in (\ref{mare}) has no finite or infinite Smith zeros (generalized eigenvalues), which will conclude that the pair $\Big(\la I-W(\la),V(\la) \Big)$ is left coprime. We will show this, by proving that $\mathcal{S}(\la)$ keeps full row rank for any $\la \in \mathbb{C}$ and also for $\la=\infty$.
 
\noindent {\bf II a)} {\em No Finite Smith Zeros} We look at the following succession of equivalent matrix pencils
 \[
   \ba{cccc} (A_{22}+ K A_{12})  -\la I_{n-p}\; \; & O  & (A_{22}K + KA_{12}K -K A_{11} - A_{21}) \; \; & KB_1+B_2 \\
                           O & I_p & I_p (\la_o - \la)  & O \\ 
                              A_{12} & I_p & \la_o I_p -A_{11} + A_{12}K  & B_1 \\
 \ea  \sim
 \]
 
  \[
   \ba{cccc} (A_{22}+ K A_{12})  -\la I_{n-p}\; \; & O  & (\la K -K A_{11} - A_{21}) \; \; & KB_1+B_2 \\
                           O & I_p & I_p (\la_o - \la)  & O \\ 
                              A_{12} & I_p & \la_o I_p -A_{11}   & B_1 \\
 \ea  \sim
 \]
  
    \[
   \ba{cccc} A_{22}  -\la I_{n-p}\; \; & O  &- A_{21} \; \; & B_2 \\
                           O & I_p & I_p (\la_o - \la)  & O \\ 
                              A_{12} & I_p & \la_o I_p -A_{11}   & B_1 \\
 \ea  \sim    \ba{cccc} A_{22}  -\la I_{n-p}\; \; & O  &- A_{21} \; \; & B_2 \\
                              A_{12} & I_p & \la_o I_p -A_{11}   & B_1 \\
                                                         O & I_p & I_p (\la_o - \la)  & O \\ 
 \ea  \sim
 \]

  \[
     \ba{cccc} A_{22}  -\la I_{n-p}\; \; & O  &- A_{21} \; \; & B_2 \\
                              A_{12} & O & \la I_p -A_{11}   & B_1 \\
                                                         O & I_p & I_p (\la_o - \la)  & O \\ 
 \ea  \sim \ba{cccc}  A_{11} - \la I_p &  A_{12} & B_1 &O \\
                             A_{21}  &  A_{22} - \la I_{n-p}   & B_2 &O \\  
                             \la I_p & O & O & I_p \ea
 \]

\noindent The last pencil above clearly holds  full row rank for any $\la \in \mathbb{C}$ due to Assumption~\ref{Controlability} and the PBH criterion.
 
 \noindent {\bf II b)} {\em No Smith Zeros at Infinity:} Follows by the adaptation of  \cite[Lemma~1]{Verg79}. 

  {\bf Proof of Lemma~\ref{step2}} This proof is based entirely on \cite[Theorem~3.1]{SIMAX} (Basic Pole Displacement Result).  We start with the following type (\ref{a2}) {\em minimal} realization of 
 \begin{small}
  \begin{equation} \label{IWV}
  \ba{cc} \big(\la I_p - W(\la)\big) & V(\la)\ea = \ba{cc|cc}  I_p & O & I_p (\la_o - \la)  & O \\ 
O &  (A_{22}+ K A_{12})  -\la I_{n-p}\; \;  & (A_{22}K + KA_{12}K -K A_{11} - A_{21}) \; \; & KB_1+B_2 \\
                               \hline  I_p &  A_{12} & \la_o I_p -A_{11} + A_{12}K  & B_1 \\
 \ea 
 \end{equation}
 \end{small}
  
  It can be observed that (\ref{IWV}) is already in the ordered block-Schur form \cite[(2.14)/pp. 252]{SIMAX}. We want to employ \cite[Theorem~3.1]{SIMAX} in order to compute the invertible TFM from \cite[(3.1)/pp. 252]{SIMAX} which we denote with $\Theta(\la)$ that by premultiplying (\ref{IWV}) will cancel out the $p$ poles at infinity of (\ref{IWV}). Any type (\ref{a3}) realization of a valid $\Theta(\la)$  satisfies   \cite[(3.2)/pp. 252]{SIMAX} for certain {\em invertible} $X$ and $Y$ matrices. Hence for any 
  \[
  \Theta(\la)= \ba{c|c} A_x-\la I_p & B_x(\la -\la_o) \\ \hline
  C_x & D_x \\ \ea
  \]  

 \noindent (with $D_x$ must be invertible because $\Theta(\la)$ is invertible ) we write the conditions from  \cite[(3.2)/pp. 252]{SIMAX}  which are equivalent with 

\begin{equation} \label{ert}
\ba{cc} A_x-\la I_p & B_x(\la -\la_o) \\  C_x & D_x \ea \ba{c} X \\ I \ea = \ba{c} Y \\ O \ea (I_p -\la O)
\end{equation}

\noindent From the first block row of (\ref{ert}) we get that $C_x X=-D_x$  and from the second block-row of (\ref{ert}) we get $B_x(\la-\la_o)=Y-(A_x-\la I_p)X$. Consequently 

  \[
  \Theta(\la)= \ba{c|c} A_x-\la I_p & Y-(A_x-\la I_p)X \\ \hline
  C_x & -C_xX \\ \ea
  \]  

\noindent which is equivalent with 

  \[
  \Theta(\la)= \ba{c|c} A_x-\la I_p & Y \\ \hline
  C_x & O \\ \ea
  \]  

\noindent where $C_x$ and $Y$ are arbitrary invertible matrices. We have denoted $C_x$ with $T_4$ and we have denote $Y$ with $T_5$ to avoid notational confusion. The proof ends.
\end{document}